\documentclass[a4paper, 11pt]{amsart}
\usepackage{amssymb,array}
\usepackage{url}
\usepackage{graphicx}
\usepackage{subfigure}

\usepackage[left=3cm, right=3cm, top=3cm, bottom=3cm]{geometry}

\renewcommand{\leq}{\leqslant}
\renewcommand{\geq}{\geqslant}

\newcommand{\N}{\mathbb{N}}

\newcommand{\R}{\mathbb{R}}
\newcommand{\C}{\mathbb{C}}
\newcommand{\E}{\mathbb{E}}
\renewcommand{\P}{\mathbb{P}}

\newcommand{\K}{\mathcal{K}}

\newcommand{\D}{\mathcal{D}}
\newcommand{\M}{\mathcal{M}}

\renewcommand{\S}{\mathfrak{S}}

\newcommand{\module}[1]{\left| #1\right|}
\newcommand{\floor}[1]{\lfloor #1 \rfloor}

\newcommand{\norm}[1]{\left\Vert #1\right\Vert}

\newcommand{\ol}{\overline}

\newcommand{\Bell}{\mathrm{Bell}}

\DeclareMathOperator{\im}{Im}
\DeclareMathOperator{\trace}{Tr}

\DeclareMathOperator{\I}{I}

\DeclareMathOperator{\Gr}{Gr}
\DeclareMathOperator{\rk}{rk}
\DeclareMathOperator{\spec}{spec}
\DeclareMathOperator{\End}{End}

\newcommand{\scalar}[2]{\langle #1 , #2\rangle}
\newcommand{\e}{\varepsilon}
\renewcommand{\phi}{\varphi}
\newcommand{\iy}{\infty}

\newcommand{\ketbra}[2]{| #1 \rangle \langle #2 |}

\newtheorem{theorem}{Theorem}[section]

\newtheorem{proposition}[theorem]{Proposition}
\newtheorem{remark}[theorem]{Remark}
\newtheorem{lemma}[theorem]{Lemma}

\newtheorem{corollary}[theorem]{Corollary}

\begin{document}

\title[Random quantum channels II]{Random quantum channels II: Entanglement of random subspaces, 
R\'enyi entropy estimates and additivity problems}
\author{Beno\^{\i}t Collins}
\address{
D\'epartement de Math\'ematique et Statistique, Universit\'e d'Ottawa,
585 King Edward, Ottawa, ON, K1N6N5 Canada
and 
CNRS, Institut Camille Jordan Universit\'e  Lyon 1, 43 Bd du 11 Novembre 1918, 69622 Villeurbanne, 
France} 
\email{bcollins@uottawa.ca}
\author{Ion Nechita}
\address{
Institut Camille Jordan Universit\'e  Lyon 1, 43 Bd du 11 Novembre 1918, 69622 Villeurbanne, 
France} 
\email{nechita@math.univ-lyon1.fr}
\subjclass[2000]{15A52, 94A17, 94A40} 
\keywords{Random matrices, Quantum information theory, Random channel, Additivity conjecture, Schatten p-norm}

\begin{abstract}
In this paper we obtain new bounds for the minimum output entropies of random quantum channels. These bounds rely on random matrix techniques arising from free probability theory.
We then revisit the counterexamples developed by Hayden and Winter to get violations of the additivity equalities for minimum output R\'enyi entropies. We show that random channels obtained by randomly coupling the input to a qubit violate the additivity of the $p$-R\'enyi entropy, for all $p>1$. For some sequences of random quantum channels, we compute almost surely the limit of their Schatten $S_1 \to S_p$ norms.
\end{abstract}

\maketitle

\section{Introduction}

The relationship between random matrix theory and free probability theory
 lies in the asymptotic freeness of random matrices. 
Asymptotic freeness, as it was discovered by Voiculescu
(see e.g. \cite{voiculescu-dykema-nica}), usually 
predicts the asymptotic pointwise behavior of joint non-commutative moments. 
However in some cases it can also predict more. For example, in the
case of i.i.d. GUE random matrices, it was showed by Haagerup and
Thorbj\o{}rnsen \cite{haagerup-thorbjornsen} that even the norms have an almost sure 
behavior predicted by free probability theory.

Quantum information theory is the analogue of 
classical information theory, where classical communication protocols 
are replaced by quantum information protocols, known as 
quantum channels. 
Despite the apparent simplicity of some mathematical question related to 
information theory, their resistance to various attempts to (dis)prove them
have led to the study of their statistical properties.

The Holevo conjecture is arguably the most important conjecture in quantum information theory,
and the theory of random matrices has been used here with success
by Hayden and Winter \cite{hayden,hayden-winter}
to produce counterexamples to the
additivity conjecture of R\'enyi entropy for $p>1$.
These counterexamples are of great theoretical importance, as they
depict the likely behavior of a random channel. 
In \cite{hastings}, Hastings gave a counterexample for the case $p=1$.
It is also of probabilistic nature, but uses a very different and less canonical measure.

In our previous paper \cite{collins-nechita},
we introduced a graphical model 
that allowed us to understand more systematically the
computation of expectation and covariance of 
random channels and their powers.
In particular we studied at length the 
output of the Bell state under a random conjugate bi-channel
and obtained the explicit asymptotic behavior of this
random matrix.

In the present paper, 
we focus on the mono-channel case.
Our main result  is Theorem \ref{thm:main-thm}. It relies on a result obtained by
one author in \cite{collins-ptrf} and can be stated 
as follows:

\begin{theorem}
Let $k$ be an integer and $t$ be a real number in $(0,1)$. 
Let $\Phi_n$ be a sequence of random channels 
defined according to Equation \eqref{defphi}.
Then there exists a probability vector $\beta^{(t)}$ 
(defined in Equation \eqref{def-beta-t})
such that, for all $\e >0$,
almost surely when $n \to \iy$, for all input density matrix $\rho$, the inequality 
\begin{equation}
\spec(\Phi(\rho)) \prec \beta^{(t)}
\end{equation}
is $\e$-close to being satisfied.
Moreover, $\beta^{(t)}$ is optimal in the sense that any other probability vector 
$\beta \in \Delta_k$ satisfying the same property
must satisfy $\beta^{(t)} \prec \beta$.
\end{theorem}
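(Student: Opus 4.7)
My plan breaks into three steps: reducing to pure state inputs, identifying the deterministic pointwise limit via \cite{collins-ptrf}, and promoting the pointwise convergence to a statement that is uniform in the input and almost sure over the randomness.

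For the reduction, since $\Phi_n$ is linear and every density matrix is a convex combination of pure states, and since the set of Hermitian $k \times k$ matrices whose spectrum is majorised by a fixed $\beta \in \Delta_k$ is convex (by the Ky Fan characterisation of majorisation as linear inequalities on partial sums of ordered eigenvalues), it suffices to control $\Phi_n(\ketbra{\psi}{\psi})$ for unit vectors $\ket{\psi} \in \C^n$. For any such fixed input, the unitary invariance of the Haar measure defining $\Phi_n$ in \eqref{defphi} implies that the law of $\Phi_n(\ketbra{\psi}{\psi})$ does not depend on $\ket{\psi}$; applying the main result of \cite{collins-ptrf} specialised to this Stinespring model, one obtains almost sure convergence of the sorted spectrum to the explicit deterministic vector $\beta^{(t)}$ of \eqref{def-beta-t}.

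The main obstacle is to promote this pointwise-in-$\ket{\psi}$ almost sure convergence to a statement that holds uniformly in the input on a single almost sure event. The map $\ket{\psi} \mapsto \spec(\Phi_n(\ketbra{\psi}{\psi}))$ is $O(1)$-Lipschitz, uniformly in $n$, by trace-norm contractivity of the Stinespring dilation and partial trace combined with Hoffman--Wielandt applied to the sorted eigenvalue vector. I would then take an $\e$-net on the unit sphere of $\C^n$ of cardinality at most $(C/\e)^{2n}$, and combine this with concentration of Lipschitz functions on the unitary group (Levy-type bounds of order $\exp(-cn\e^2)$ at each net point); a union bound followed by Borel--Cantelli then yields the desired uniform almost sure $\e$-majorisation.

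Finally, optimality follows because pure state inputs asymptotically realise $\beta^{(t)}$ almost surely, so any probability vector $\beta \in \Delta_k$ enjoying the $\e$-majorisation property for every $\e > 0$ and every input must in particular satisfy $\beta^{(t)} \prec \beta$.
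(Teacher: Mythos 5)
Your reduction to pure states is fine, but the core of the argument as written is incorrect. The claim that for a \emph{fixed} unit vector $\psi$ the spectrum of $\Phi_n(\ketbra{\psi}{\psi})$ converges almost surely to $\beta^{(t)}$ is false: in that case $U_n\chi_n(\ketbra{\psi}{\psi})U_n^*$ is the projection onto a single Haar-random unit vector of $\C^{nk}$, and its partial trace converges almost surely to the maximally mixed state $\I_k/k$, whose spectrum is $(1/k,\ldots,1/k)$, not $\beta^{(t)}$ (for instance $\beta^{(t)}_1 = \phi(1/k,t)$ is macroscopically larger than $1/k$). The vector $\beta^{(t)}$ describes the \emph{worst case} over the whole random subspace $V=\im(U_n\chi_n U_n^*)$, not the typical output for one input. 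The result from \cite{collins-ptrf} that actually drives the theorem (Theorem~\ref{thm:ptrf}) is not a pointwise statement: it gives almost sure convergence of the operator norm $\norm{P_V P_{\C^n\otimes F}P_V}_\iy$, and by Lemma~\ref{lem:elementaire} this norm already \emph{is} the supremum of $\trace(P_x P_{\C^n\otimes F})$ over unit vectors $x\in V$. Uniformity over the subspace is therefore built into what one imports; there is nothing of this kind left to promote.

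Because you start from the wrong pointwise statement, the net-and-concentration machinery you invoke is both unnecessary and does not close. An $\e$-net on the unit sphere of $\C^{p_n}$ with $p_n\sim tnk$ has cardinality roughly $(C/\e)^{2tnk}$, while Levy-type concentration of an $O(1)$-Lipschitz functional of a Haar unitary in $U(nk)$ gives bounds of order $\exp(-cnk\delta^2)$; the union bound then forces $c\delta^2 > 2t\log(C/\e)$, which cannot hold with both $\e$ and $\delta$ small, and in any case the concentration is around $\I_k/k$, so the deviation you need to control up to $\beta^{(t)}$ is macroscopic. This is exactly the regime where the Hayden--Leung--Winter argument is delicate and only yields sub-optimal constants. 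The correct route, and the one the paper takes, rewrites $\max_{x\in V,\norm{x}=1}s_j(x)$ via the min-max theorem and the conditional-expectation property of the partial trace as $\max_{F\in\Gr_j(\C^k)}\norm{P_V P_{\C^n\otimes F}P_V}_\iy$; since $k$ is fixed, a finite $\e$-net on the small, $n$-independent Grassmannian $\Gr_j(\C^k)$ suffices (Lemma~\ref{lem:estimate-grassmannian}), and Theorem~\ref{thm:ptrf} is applied finitely many times. That finite-net step over $\Gr_j(\C^k)$ — not over the input sphere — is what your outline is missing. Your optimality paragraph likewise rests on the incorrect pointwise claim; the argument should instead fix a single $F_0\in\Gr_j(\C^k)$ and use $\max_{x\in V,\norm{x}=1}s_j(x)\geq\norm{P_V P_{\C^n\otimes F_0}P_V}_\iy\to\phi(j/k,t)$ almost surely.
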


We combine this result result with  bi-channel bounds 
to obtain new counterexamples to the
additivity conjectures for $p>1$. 

An illustration of our result is Corollary \ref{cor-explicite}, which one can reformulate  as follows:

\begin{theorem}
For each $p>1$ and each finite quantum space $A$ of dimension $k'\geq 2$, there exists 
an integer such that for each quantum system $B$ of 
dimension larger than this integer, 
the quantum channel arising from a quantum coupling $A$ and $B$ (of appropriate relative dimension,
depending on $p$ and $k'$)
has a high probability to be R\'enyi superadditive
when coupled with its conjugate.
\end{theorem}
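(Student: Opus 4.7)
The plan is to follow the Hayden--Winter strategy for additivity violations, using the main theorem above as the single-channel lower bound on the minimum output R\'enyi entropy, and using the analysis of the Bell-state output of the conjugate bi-channel from our companion paper \cite{collins-nechita} as the bi-channel upper bound. Fix $p>1$ and write $k:=k'\geq 2$. For a parameter $t\in(0,1)$ to be chosen below in terms of $p$ and $k'$, apply the main theorem to the sequence $\Phi_n$ of random channels of Equation \eqref{defphi}. Since the R\'enyi entropy $H_p$ is Schur-concave for $p>0$, the almost sure, $\e$-approximate majorization $\spec(\Phi_n(\rho))\prec\beta^{(t)}$ yields, uniformly over input density matrices $\rho$,
\[
H_p^{\min}(\Phi_n) \;\geq\; H_p(\beta^{(t)}) - \delta_n,
\]
with $\delta_n\to 0$ almost surely.

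For the bi-channel upper bound, I would feed the maximally entangled (Bell) state $\psi$ on two copies of the input space to $\Phi_n\otimes\bar\Phi_n$. The explicit asymptotic description of $(\Phi_n\otimes\bar\Phi_n)(\psi)$ obtained in \cite{collins-nechita} shows that its limiting spectrum carries one dominant eigenvalue of order $t$ plus a small Marchenko--Pastur-like bulk. Consequently,
\[
H_p^{\min}(\Phi_n\otimes\bar\Phi_n) \;\leq\; H_p\big((\Phi_n\otimes\bar\Phi_n)(\psi)\big) \;\leq\; U_p(t,k') + \eta_n
\]
almost surely, where $U_p(t,k')$ is an explicit functional of the limiting spectrum and $\eta_n\to 0$.

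Superadditivity is then equivalent to the strict inequality $U_p(t,k')<2\,H_p(\beta^{(t)})$. The idea is to choose $t$ small enough that $\beta^{(t)}$ is close to the uniform distribution on $k'$ points, pushing $H_p(\beta^{(t)})$ near its maximum $\log k'$, while the spike in the Bell-state output still drags $U_p(t,k')$ strictly below $2\log k'$. Once such a $t=t(p,k')$ is fixed, continuity absorbs the error terms $\delta_n,\eta_n$, and a Borel--Cantelli argument upgrades the estimates to an almost-sure violation for every $n$ exceeding an integer threshold $n_0(p,k',t)$; this $n_0$ plays the role of the dimension threshold on $B$ in the statement.

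The main obstacle is the third step: producing such a $t=t(p,k')$ for \emph{every} $p>1$ and every $k'\geq 2$, using the explicit free-probabilistic form of $\beta^{(t)}$ recalled in Equation \eqref{def-beta-t} together with the bi-channel spectral asymptotics of \cite{collins-nechita}. The two endpoints deserve particular care: for $p\to 1^+$ both quantities approach their Shannon counterparts and the gap degenerates, whereas for $p\to\infty$ the R\'enyi entropy is governed solely by the top eigenvalue and one must verify that the Bell-state spike strictly dominates $\beta^{(t)}_1$. The intermediate values of $p$ should then follow by a monotonicity or compactness argument on the continuous function $p\mapsto U_p(t,k')-2H_p(\beta^{(t)})$.
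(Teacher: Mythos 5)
Your high-level strategy (single-channel lower bound via the majorization theorem, bi-channel upper bound via the Bell state, compare) is exactly the paper's strategy, but the parameter bookkeeping is wrong in a way that breaks the argument. You write ``$k := k'$'' and propose to choose $t=t(p,k')$ small. In the paper's model, $k$ is the \emph{output} dimension of $\Phi_n:\M_{p_n}(\C)\to\M_k(\C)$, whereas $k'=1/t$ is the dimension of the system $A$ to which the input is coupled; they are independent parameters. The statement fixes $k'\geq 2$ (hence $t=1/k'$ is \emph{fixed}, not a free knob) and the ``integer threshold'' is on $k$: one must take the output dimension $k\geq k_0(p,k')$ and then $n\to\iy$. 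In the paper this is done explicitly for $t=1/2$ in Theorem~\ref{thm:violation_renyi}, where one shows $H^p(\beta^{(t)})\to\frac{p}{p-1}\log(1/t)$ and $H^p(\gamma^{(t)})\to\frac{p}{p-1}\log(1/t)$ as $k\to\iy$, so that $H^p(\gamma^{(t)})-2H^p(\beta^{(t)})\to -\frac{p}{p-1}\log(1/t)<0$ --- a violation with a macroscopic gap; Corollary~\ref{cor-explicite} then notes the same computation works for any $t=1/k'$.

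Your proposed limit $t\to 0$ with the output dimension held at $k'$ actually fails. In that regime $\beta^{(t)}$ tends to the uniform vector on $k'$ points, and one computes
\[
H^p(\beta^{(t)}) = \log k' - c_1\, t + o(t),
\qquad
H^p(\gamma^{(t)}) = 2\log k' - c_2\, t^2 + o(t^2),
\]
with $c_1,c_2>0$ (the first-order term in $t$ cancels in the bi-channel entropy because the Bell-state spike at height $t$ is balanced against the uniform bulk). Thus $H^p(\gamma^{(t)}) - 2H^p(\beta^{(t)}) = 2c_1 t - c_2 t^2 + o(t) > 0$ for small $t$: the upper bound on $H^p_{\min}(\Phi\otimes\ol\Phi)$ sits \emph{above} the lower bound on $2H^p_{\min}(\Phi)$, so no violation is exhibited. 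Your sentence ``the spike \ldots still drags $U_p(t,k')$ strictly below $2\log k'$'' is true but irrelevant: $2H^p(\beta^{(t)})$ is also strictly below $2\log k'$, and by more. The correct comparison is the one the paper makes, with $t$ fixed and $k\to\iy$; there the two asymptotic entropies coincide at $\frac{p}{p-1}\log k'$, so halving one of them produces a gap that does not vanish. A minor further imprecision: the limiting bi-channel spectrum in Theorem~\ref{thm:eig_product} is one spike at $t+\frac{1-t}{k^2}$ plus $k^2-1$ \emph{equal} eigenvalues $\frac{1-t}{k^2}$, not a Marchenko--Pastur bulk, and the paper does not invoke a Borel--Cantelli upgrade (it explicitly remarks that quantitative control in $n$ would require extra work).
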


From a quantum information theory point of view, the true novelty of this result is that 
any dimension $k'\geq 2$ is acceptable. This result does not seem
to be attainable 
with the alternative proofs available in \cite{hayden,hastings,brandao-horodecki,fukuda-king}.

Our techniques rely on \emph{free probability theory}. They allow us to understand entanglement
of random subspaces, 
and do not rely on a specific choice
of a measure of entanglement. 
Even though the von Neumann entropy is the most natural measure of entanglement in general, this
subtlety is important as
the papers \cite{aubrun-nechita, aubrun-nechita2} imply that
all  the $p\geq 1$ R\'enyi entropies don't enclose enough
data to fully understand entanglement. 

Our paper is organized as follows. We first recall a few basics and useful results of free probability theory of random matrix theoretical flavor in Section \ref{sec:free-proba}.
 In Section \ref{sec:quantum-channels}, we describe the random quantum channels we study.
 Section \ref{sec:confining} describes the behavior of the eigenvalues
 of the outputs of random channels.
 In Section \ref{sec:cex}, we use results of the  previous sections and of \cite{collins-nechita} to obtain new counterexamples to the additivity conjectures.

\section{A reminder of free probability}
\label{sec:free-proba}

The following is a summary of results contained in
\cite{collins-ptrf},
\cite{voiculescu}, \cite{voiculescu-dykema-nica}
and \cite{collins-sniady}.

\subsection{Asymptotic freeness}

A {\it non-commutative probability space \it} is
an algebra $\mathcal A$ with unit endowed with a tracial 
state $\phi$. 
An element of $\mathcal A$ is called
a (non-commutative) random variable. In this paper we shall be mostly concerned with the non-commutative probability space of \emph{random matrices} $(\M_n(L^{\iy-}(\Omega, \P)), \E[n^{-1}\trace(\cdot)])$ (we use the standard notation $L^{\iy-}(\Omega, \P) = \cap_{p\geq 1} L^p(\Omega, \P)$). 

Let $\mathcal A_1, \ldots ,\mathcal A_k$ be subalgebras of $\mathcal A$ having the same unit as $\mathcal A$.
They are said to be \emph{free} if for all $a_i\in \mathcal  A_{j_i}$ ($i=1, \ldots, k$) 
such that $\phi(a_i)=0$, one has  
$$\phi(a_1\cdots a_k)=0$$
as soon as $j_1\neq j_2$, $j_2\neq j_3,\ldots ,j_{k-1}\neq j_k$.
Collections $S_{1},S_{2},\ldots $ of random variables are said to be 
free if the unital subalgebras they generate are free.

Let $(a_1,\ldots ,a_k)$ be a $k$-tuple of selfadjoint random variables and let
$\mathbb{C}\langle X_1 , \ldots , X_k \rangle$ be the
free $*$-algebra of non commutative polynomials on $\mathbb{C}$ generated by
the $k$ indeterminates $X_1, \ldots ,X_k$. 
The {\it joint distribution\it} of the family $\{a_i\}_{i=1}^k$ is the linear form
\begin{align*}
\mu_{(a_1,\ldots ,a_k)} : \C\langle X_1, \ldots ,X_k \rangle &\to \C \\
P &\mapsto \phi (P(a_1,\ldots ,a_k)).
\end{align*}

Given a $k$-tuple $(a_1,\ldots ,a_k)$ of free 
random variables such that the distribution of $a_i$ is $\mu_{a_i}$, the joint distribution
$\mu_{(a_1,\ldots ,a_k)}$ is uniquely determined by the
$\mu_{a_i}$'s.
A family $(a_1^{n},\ldots ,a_k^{n})_n$ of $k$-tuples of random
variables is said to \emph{converge in distribution} towards $(a_1,\ldots ,a_k)$
iff for all $P\in \C \langle X_1, \ldots ,X_k \rangle$, 
$\mu_{(a_1^n,\ldots ,a_k^n)}(P)$ converges towards
$\mu_{(a_1,\ldots ,a_k)}(P)$ as $n\to\infty$. 
Sequences of random variables  $(a_1^{n})_n,\ldots ,(a_k^{n})_n$ are called \emph{asymptotically free} as $n \to \iy$
iff the $k$-tuple $(a_1^{n},\ldots ,a_k^{n})_n$ converges in distribution towards a family of free random variables.

The following result was contained in \cite{voiculescu} (see also \cite{collins-sniady}).

\begin{theorem}\label{libre}
Let $\{U^{(n)}_k\}_{k \in \N}$ be a collection of independent 
Haar distributed random matrices of $\M_n (\C )$ and $\{W^{(n)}_k\}_{k\in \N}$ be a 
set of constant matrices of $\M_n (\C )$ 
admitting a joint limit distribution as $n \to \iy$ with respect to the
state $n^{-1}\trace$.
Then the family $\{U^{(n)}_k, W^{(n)}_k\}_{k \in \N}$ admits a limit $*$-distribution $\{u_k, w_k\}_{k \in \N}$ with respect to $\E[n^{-1}\trace]$, such that $u_1$, $u_2$, \ldots, $\{w_1, w_2, \ldots\}$ are free.
\end{theorem}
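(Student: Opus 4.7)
The plan is to prove asymptotic $*$-freeness by reducing it to a moment computation and then evaluating the relevant moments via the Weingarten calculus, following the Collins--\'Sniady framework.

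First I would recall that, since the limiting distribution of each Haar unitary on the unit circle is uniquely specified, it suffices to check that for every noncommutative monomial
$$P = p_1(W^{(n)}) \, q_1(U^{(n)}_{k_1}, (U^{(n)}_{k_1})^*) \, p_2(W^{(n)}) \, q_2(U^{(n)}_{k_2}, (U^{(n)}_{k_2})^*) \cdots$$
the quantity $\E[n^{-1}\trace(P)]$ converges as $n\to\iy$ to the value prescribed by freeness of free Haar unitaries from the limit $*$-distribution of the $w_k$'s. By polarisation/linearity one may assume each $q_j$ is a pure monomial in one $U^{(n)}_{k_j}$ and its adjoint, and by the independence of the $U^{(n)}_k$ across $k$ the expectation factorises into a product of Weingarten sums, one for each $k$.

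Second, I would apply the Weingarten formula to each factor. Writing $\E[n^{-1}\trace(P)]$ as a sum over multi-indices and then integrating out the entries of each $U^{(n)}_k$ produces a sum of the form
\begin{equation*}
\sum_{\sigma, \tau \in \S_m} \Wg(n, \sigma \tau^{-1}) \cdot T_{\sigma, \tau}(W^{(n)}),
\end{equation*}
where $T_{\sigma, \tau}(W^{(n)})$ is a product of normalised traces of words in the deterministic matrices $W^{(n)}$, indexed by the cycles of a permutation built from $\sigma$, $\tau$, and the cyclic structure of the trace on $P$. By hypothesis on the $\{w_k^{(n)}\}$, each factor $n^{-1}\trace(\cdots)$ converges, so $T_{\sigma,\tau}(W^{(n)})$ converges to the corresponding free product of traces, times a power of $n$ equal to the total number of cycles involved.

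Third, I would carry out the asymptotic analysis using the standard estimate $\Wg(n, \pi) = n^{-m - |\pi|}(\Mob(\pi) + O(n^{-2}))$ combined with the genus inequality that bounds the number of cycles produced by the index summation. A term-by-term comparison shows that the net power of $n$ is nonpositive and equals $0$ exactly for the non-crossing pairings; all other contributions vanish in the limit. The surviving leading terms, weighted by the M\"obius function $\Mob$, reproduce precisely the moment formula for a family of free Haar unitaries free from the limit algebra of the $w_k$'s -- equivalently, they vanish on alternating centred products, which is the defining characterisation of freeness.

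The main obstacle is the combinatorial bookkeeping of the genus inequality: one must verify carefully that the only permutation pairs $(\sigma, \tau)$ contributing at leading order are those whose cycle diagram is planar with respect to the cyclic trace structure, and that the resulting $\Mob$-weighted sum coincides with the Speicher--Nica description of free Haar unitaries. Once this is in hand, the conclusion is immediate from the assumed convergence of the joint distribution of the $\{w_k^{(n)}\}$.
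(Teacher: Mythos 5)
The paper does not prove Theorem~\ref{libre} at all: it is quoted as a known result, citing Voiculescu \cite{voiculescu} (with a pointer to Collins--\'Sniady \cite{collins-sniady}). So there is no ``paper proof'' to match against. Your sketch is a correct high-level outline of the Collins--\'Sniady route, which is indeed one of the two cited sources: reduce to mixed $*$-moments, integrate out each independent $U^{(n)}_k$ via the Weingarten formula, use $\Wg(n,\pi)=n^{-2m+\#(\pi)}(\Mob(\pi)+O(n^{-2}))$ together with the genus inequality to isolate the leading-order (non-crossing) permutation pairs, and identify the surviving $\Mob$-weighted sum with the moments of free Haar unitaries free from $\{w_k\}$. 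That is exactly the strategy of \cite{collins-sniady}. It is worth noting that Voiculescu's own proof in \cite{voiculescu} is rather different in flavor: it does not proceed through the Weingarten function and explicit moment--cumulant combinatorics, but through a concentration and approximation argument (his ``strengthened asymptotic freeness''); the Weingarten approach you chose is more computational but also gives the combinatorial machinery needed elsewhere (and is what the present paper actually uses for its own calculations). Two small points you should make explicit if you flesh this out: (i) by $*$-linearity and the vanishing of $\E[U_{i_1j_1}\cdots U_{i_pj_p}\bar U_{i_1'j_1'}\cdots \bar U_{i_q'j_q'}]$ unless $p=q$, one may restrict to balanced words in each $U_k$, which is what makes the limit $u_k$ a Haar unitary in the free sense; and (ii) in the Weingarten sum the quantity $T_{\sigma,\tau}(W^{(n)})$ should be normalized so that the assumed joint convergence of the $W^{(n)}_k$ with respect to $n^{-1}\trace$ is exactly what gives convergence of each factor, and the power-of-$n$ bookkeeping must be done against that normalization.
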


\subsection{Free projectors.}

Let us fix real numbers 
$0\leq \alpha,\beta \leq 1$, and consider, for all $n$, a
selfadjoint projector $\pi_n \in \M_n (\C )$ of rank $q_{n}$ such that asymptotically
$q_{n}\sim \alpha n$ as $n\to\infty$.
Let $\pi_{n}'$ be a projector of rank $q_{n}'$ such that
$q_{n}'\sim \beta n$, and assume that it can be written under the form
$U\pi_n U^*$, where $U$ is a Haar distributed unitary random matrix
independent from $\pi_{n}$

It is a consequence of Theorem \ref{libre}, 
that $\pi_{n}$ and $\pi_{n}'$ are asymptotically free. 
Therefore $\pi_{n}\pi_{n}'\pi_{n}$ has an empirical eigenvalues
distribution converging towards a probability measure.
This measure is usually  denoted by
$\mu_1\boxtimes\mu_2$, where $\mu_1,\mu_2$ are the limit empirical eigenvalue distributions of the projectors $\pi_n$ and $\pi'_n$ respectively:
\begin{equation*}
\mu_1= (1-\alpha)\delta_0+\alpha\delta_1, \qquad \mu_2=(1-\beta)\delta_0+\beta\delta_1.
\end{equation*}

In this specific case, we can compute explicitly $\mu_1\boxtimes\mu_2$.
For this purpose, we 
 introduce two 2-variable functions which will be of great importance in what follows.
\begin{align*}
\phi^+:\{(x,y) \in [0,1]^2 \} &\to [0,1] \\
(x,y) &\mapsto 1-\left[ \sqrt{(1-x)(1-y)} - \sqrt{xy} \right]^2\\
\phi^-:\{(x,y) \in [0,1]^2 \} &\to [0,1] \\
(x,y) &\mapsto 1-\left[ \sqrt{(1-x)(1-y)} + \sqrt{xy} \right]^2
\end{align*}
Let us omit the variables of $\phi^{+/-}$ and rewrite
$$\phi^{+/-}=
\phi^{+/-}(\alpha,\beta)=\alpha +\beta-2\alpha\beta\pm\sqrt{4\alpha\beta (1-\alpha)(1-\beta)}$$
It follows then from \cite{voiculescu-dykema-nica}, Example
3.6.7, that
\begin{equation*}
\mu_1\boxtimes\mu_2=[1-\min (\alpha,\beta)]\delta_0+
[\max (\alpha +\beta-1,0)]\delta_1+
\frac{\sqrt{(\phi^+ -x)(x-\phi^-)}}{2\pi x(1-x)}1_{[\phi^-,\phi^+ ]}dx
\end{equation*}
The proof relies on a technique introduced by Voiculescu to compute
$\mu_1\boxtimes\mu_2$ in general, called the $S$-transform.
For more details, we refer the interested reader to \cite{voiculescu-dykema-nica}. Since we are only interested in $\phi^+$, we consider the two-variable function $\phi:[0,1]^2 \to [0,1]$
$$\phi(x,y) = 
\begin{cases}
0 \quad & \text{if } x=0 \text{ or } y=0;\\
\phi^+(x,y) \quad & \text{if } x,y>0  \text{ and } x+y \leq 1;\\
1 \quad & \text{if } x+y>1.\\
\end{cases}
$$

In the case where $\alpha +\beta <1$, the ranges of $\pi_n$ and $\pi'_n$ do not (generically) overlap and $\phi(\alpha,\beta)<1$.
The previous asymptotic freeness results imply that almost surely, 
$$\liminf_n ||\pi_n\pi_n'\pi_n||_{\infty}\geq \phi (\alpha,\beta ).$$
We are interested in whether we actually have
$$\lim_n ||\pi_n\pi_n'\pi_n||_{\infty}= \phi (\alpha,\beta ) <1.$$
This turns out to be true.
This is an involved result whose proof we won't discuss here. 
We just recall the result below as a theorem, following
 \cite{collins-ptrf} (Theorem 4.15), see also \cite{ledoux}.

\begin{theorem}
\label{thm:ptrf}
In $\C^n$, choose at random according to the Haar measure
 two independent subspaces $V_{n}$ and $V_{n}'$ of respective
dimensions $q_n\sim \alpha n$ and $q_n'\sim\beta n$
where $\alpha,\beta\in (0,1)$.
 Let $\pi_n$ (resp. $\pi_n'$) be the orthogonal projection onto $V_{n}$ 
 (resp. $V_{n}'$).
 Then,
  \[\lim_n \norm{\pi_{n}\pi_{n}'\pi_n}_\iy=\phi (\alpha,\beta).\]
\end{theorem}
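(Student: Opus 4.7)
The statement is an upgrade from weak (moment) convergence to norm convergence, and splits into a lower and an upper bound. The lower bound $\liminf_n \norm{\pi_n\pi_n'\pi_n}_\iy \geq \phi(\alpha,\beta)$ is essentially free: Theorem \ref{libre} gives asymptotic freeness of $\pi_n$ and $\pi_n'$, hence the empirical eigenvalue distribution of $\pi_n\pi_n'\pi_n$ converges weakly to $\mu_1\boxtimes\mu_2$, which has $\phi^+(\alpha,\beta)=\phi(\alpha,\beta)$ as the right edge of its support, and the operator norm dominates any point in the support of the limit spectral measure. The content is therefore the matching upper bound $\limsup_n \norm{\pi_n\pi_n'\pi_n}_\iy \leq \phi(\alpha,\beta)$, which does \emph{not} follow from weak convergence alone, since stray eigenvalues could in principle escape the bulk.

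My plan is to combine a Weingarten-based moment computation with Gaussian concentration on the unitary group. Write $\pi_n' = U\pi_n U^*$ with $U$ Haar distributed, and consider $f(U) := \norm{\pi_n U \pi_n U^* \pi_n}_\iy$. This is an $O(1)$-Lipschitz function of $U$ (the two factors of $U$ contribute linearly). The unitary group $U(n)$ satisfies a log-Sobolev inequality with constant of order $n$, so $\P(|f - \E f| > t) \leq 2\exp(-cnt^2)$. Borel--Cantelli then reduces the almost-sure statement to showing $\E f \to \phi(\alpha,\beta)$.

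For the expectation bound I use $f \leq (\trace[(\pi_n U\pi_n U^*\pi_n)^p])^{1/p}$, giving $\E f \leq (\E\trace[(\pi_n U\pi_n U^*\pi_n)^p])^{1/p}$. I expand the right-hand side by the Weingarten formula: the expectation becomes a sum over pairs $(\sigma,\tau) \in \S_p^2$ of $\Wg_n(\sigma^{-1}\tau)$ times a product of traces of powers of $\pi_n$, which equals $q_n^{\#\sigma} (q_n')^{\#\tau}$ after accounting for the cycle structure. Using the asymptotics $\Wg_n(\sigma) = \Mob(\sigma) n^{-p-\module{\sigma}}(1+O(n^{-2}))$ from \cite{collins-sniady}, the leading-order-in-$n$ contributions come from the non-crossing pairings of permutations and sum to precisely the $p$-th moment $\int x^p \, d(\mu_1\boxtimes\mu_2)(x) \leq \phi(\alpha,\beta)^p$, multiplied by a factor $n$ (a single free trace). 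The subleading permutations contribute lower-order terms in $n$.

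The main obstacle is controlling these subleading terms \emph{uniformly in $p$} when $p$ is allowed to grow with $n$. One needs to take $p=p(n) \to \iy$ slowly enough, say $p(n) = o(\log n)$, so that $(n \cdot \phi(\alpha,\beta)^p (1+\e_n))^{1/p} \to \phi(\alpha,\beta)$, yet fast enough to beat the prefactor $n^{1/p}$. The delicate point is that naive bounds on the Weingarten function degrade factorially in $p$ through the cardinality of $\S_p$; the result of \cite{collins-ptrf} (Theorem 4.15) handles this by a careful combinatorial count of permutations at each defect order, showing that the corrections remain of the form $1+O(p^C/n^2)$ for a universal $C$. This cancellation is the heart of the matter and is what elevates the argument from weak convergence to norm convergence, playing the role for random projections that Haagerup--Thorbj\o{}rnsen's strong convergence plays for the GUE.
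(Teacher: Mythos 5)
The paper does not prove Theorem~\ref{thm:ptrf}: the text immediately preceding it reads ``This is an involved result whose proof we won't discuss here. We just recall the result below as a theorem, following \cite{collins-ptrf} (Theorem 4.15), see also \cite{ledoux}.'' So there is no in-paper proof to compare against; the statement is imported as a black box.

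Taking your sketch on its own terms: the lower bound via weak convergence is fine, and the outer logic for the upper bound --- Lipschitz concentration on $U(n)$ reducing the almost-sure statement to control of $\E\norm{\pi_n\pi_n'\pi_n}_\iy$, and bounding that expectation by a high trace moment --- is the standard and correct frame. However, the growth rate you impose on $p(n)$ is backwards: to make $n^{1/p}\to 1$ in the bound $\bigl(n\cdot\phi(\alpha,\beta)^p(1+\e_n)\bigr)^{1/p}$ one needs $\log n/p\to 0$, i.e.\ $p\gg\log n$, whereas you write $p(n)=o(\log n)$; as stated your two constraints on $p(n)$ (``slowly enough'' and ``fast enough to beat $n^{1/p}$'') contradict each other. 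More substantively, the entire content of the theorem is precisely the uniform-in-$p$ control of the Weingarten corrections to $\E\trace\bigl[(\pi_n U\pi_n U^*\pi_n)^p\bigr]$ when $p$ grows faster than $\log n$, and your sketch defers exactly this step to the proof of collins-ptrf Theorem~4.15, which is the very result you were asked to prove. As a proof this is circular; as an outline it correctly locates, but does not supply, the hard part.

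It is also worth noting that the mechanism in the cited references is not a raw Weingarten genus expansion with $p$-uniform combinatorial bookkeeping. The eigenvalues of $\pi_n\pi_n'\pi_n$ restricted to the range of $\pi_n$ form a Jacobi unitary ensemble with an explicit joint density, and Ledoux's differential-operator/orthogonal-polynomial techniques then give decay of the mean spectral density past the edge $\phi(\alpha,\beta)$ directly; Collins's Theorem~4.15 relies on this integrable structure. That explicitness is what makes the large-$p$ moment control tractable, and it is the ingredient absent from a purely permutation-combinatorial argument.
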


\section{Quantum channels and additivity conjectures}
\label{sec:quantum-channels}

\subsection{R\'enyi entropies and minimum output entropies}
Let $\Delta_k = \{x \in \R_+^k \, | \, \sum_{i=1}^k x_i = 1\}$ be the $(k-1)$-dimensional probability simplex. For a positive real number $p>0$, define the \emph{R\'enyi entropy of order $p$} of a probability vector $x \in \Delta_k$ to be
\[H^p(x) = \frac{1}{1-p}\log\sum_{i=1}^k x_i^p.\]
Since $\lim_{p \to 1} H^p(x)$ exists, we define the \emph{Shannon entropy} of $x$ to be
this limit, namely:
\[H(x) = H^1(x) = -\sum_{i=1}^k x_i \log x_i.\]
We extend these definitions to density matrices by functional calculus:
\begin{align*}
H^p(\rho) &= \frac{1}{1-p}\log \trace \rho^p;\\
H(\rho) &= H^1(\rho) = -\trace \rho \log \rho.
\end{align*}

Given a vector $x\in \C^n, \norm{x} = 1$, we call 
$P_x$ the rank one orthogonal projection onto the span of $x$.
Using Dirac's bra-ket notation, $P_x=\ketbra{x}{x}$.
More generally, for a subspace $V\subset \C^n$,
we denote by $P_V$ the orthogonal projection onto 
$V$ in $\M_n(\C)$.

A \emph{quantum channel} is a linear completely positive trace preserving map $\Phi:\M_n(\C)\to \M_k(\C)$. 
The trace preservation condition means that density matrices are mapped to density matrices, and 
the complete positivity reads:
\[ \forall d \geq 1, \quad \Phi \otimes \I_d : \M_{nd}(\C) \to \M_{kd}(\C) \text{ is a positive map.}\]
We recall that according to Stinespring theorem,
a linear map $\Phi : \M_n(\C) \to \M_k(\C)$ is a quantum channel if and only if there exists a finite dimensional Hilbert 
space $\K = \C^{d}$, and a partial isometry $V \in \End(\C^n, \C^{kd})$ (satisfying $V^*V= \I_n$)
such that
\begin{equation}\label{eq:Stinespring_form}
\Phi(X) = \trace_\K\left[ V X V^* \right], \quad \forall X \in \M_n(\C).
\end{equation}

For a quantum channel $\Phi:\M_n(\C) \to \M_k(\C)$, we define its \emph{minimum output R\'enyi entropy (of order $p$)} by
\[H^p_{\min}(\Phi) = \min_{\substack{\rho\in \M_n(\C) \\ \rho\geq 0, \trace{\rho}=1}} H^p(\Phi(\rho)).\]

Since the R\'enyi entropies are concave functions, their minima are attained on the extremal points of the set of density matrices and hence 
\[H^p_{\min}(\Phi) = \min_{\substack{x\in \C^n \\ \norm{x} = 1}} H^p(\Phi(P_x)).\]

\subsection{The random quantum channel model}
\label{subsec:random-quantum-channel-model}
We  fix an integer $k$ and a real number
$t\in (0,1)$.
For each $n$, let
$U_n\in \M_{nk}(\C )$ be a random unitary matrix distributed according to the Haar measure, and
 $q_n$ be a projection of $\M_{nk} (\C )$ of trace $p_n$ such that $p_n/(kn)\sim t$ as $n\to\infty$.
 To $q_n$ we associate a non-unital matrix algebra map $\chi_n: \M_{p_n}(\C ) \to \M_{nk} (\C )$
 satisfying $\chi_n (1)=q_n$.
 The choice of $\chi_n$ is unique up to unitary conjugation, and the actual choice 
 of $\chi_n$ is irrelevant for the computations
 we want to perform - in the sense that any choice will yield the same results.

We study the sequence of random channels 
$$\Phi_n: \M_{p_n}(\C ) \to \M_k(\C)$$
given by
\begin{equation}
\label{defphi}
\Phi_n (X)=\trace_n (U_n(\chi_n (X))U_n^*).
\end{equation}
\begin{remark}
In our previous paper \cite{collins-nechita}, we considered exactly the same model of random quantum channels, with one small difference: the partial trace was taken with respect to $\C^k$. However, it is well-known that, when partial tracing a rank one projector, the non-zero eigenvalue of the resulting matrix do not depend on which space is traced out. Hence, from the point of view of eigenvalue statistics, the model we consider here is identical with the one in \cite{collins-nechita}, Section 6.2.
\end{remark}

Graphically, our model amounts to
Figure \ref{fig:quantum_channel}.
We refer to the first paper of this series, \cite{collins-nechita} for details about this
graphical notation.

\begin{figure}[ht]
\includegraphics{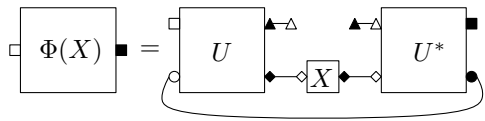}
\caption{Diagram for $\Phi(X)$}
\label{fig:quantum_channel}
\end{figure}

We are interested in the random process given by the 
\emph{set of all possible} eigenvalues of $\Phi(X)$
as $n\to\infty$. In our setup, 
we deal with $k$ eigenvalues. 

Let $V_n$ be the image of $U\chi_nU^*$.
This is a random vector space in $\C^n\otimes \C^k$ of dimension 
$p_n$ distributed according to the uniform measure on the Grassmannian space $\Gr_{p_n}(\C^{nk})$.

If we can ensure that the entanglement of \emph{every} norm one vector 
$x\in V_n$ in $\C^n\otimes \C^k$ is large with high probability
for the uniform measure on $V_n\in \Gr_{p_n}(\C^{nk})$,
 this will yield new entropy bounds. 
The entanglement is always a concave function of
the principal values of $x$.
We recall that
for an element $x \in \C^n \otimes \C^k$ we denote $\lambda(x)$ and $\rk(x)$ 
the singular values and the rank of $x$, when viewed as a matrix $x \in \M_{n \times k}(\C)$.
In quantum information theory, these quantities are also called
  the \emph{Schmidt coefficients} and the \emph{Schmidt rank} of $x$ respectively:
  \[x = \sum_{i=1}^{\rk(x)} \sqrt{\lambda_i(x)} e_i \otimes f_i,\]
  where $\{e_i\}$ and $\{f_i\}$ are orthonormal families from $\C^n$ and $\C^k$ respectively. 
  Both quantities can also be expressed as the rank and respectively the spectrum of the reduced 
  density matrix $\trace_n P_x$.
The strategy adopted in this paper is to describe a convex polyhedron 
such that with high probability, for a vector subspace $V$ chosen at random, for all input $x \in V$, the eigenvalue vector $\lambda(x)$ belongs to a neighborhood of this convex set.

\subsection{Known bounds}

Some results are already available in order to quantify the entanglement of 
generic spaces in $ \Gr_{p_n}(\C^n\otimes \C^k)$.
The best result known so far is arguably the
 following theorem of Hayden, Leung and Winter in
\cite{hayden-leung-winter}:

\begin{theorem}[Hayden, Leung, Winter, \cite{hayden-leung-winter}, Theorem IV.1]
\label{thm-hayden-leung-winter}
Let $A$ and $B$ be quantun systems of dimesion $d_a$ and $d_B$
with $d_b\geq d_A\geq 3$ Let $0<\alpha <\log d_A$. Then there exists a subspace
$S\subset A\otimes B$ of dimension
$$d\sim d_Ad_B\frac{\Gamma\alpha^{2.5}}{(\log d_A)^{2.5}}$$

such that all states $x\in S$ have entanglement satisfying
$$H(P_x)\geq \log d_A -\alpha -\beta,$$
where $\beta =d_A/(d_B\log 2)$ and $\Gamma = 1/1753$.
\end{theorem}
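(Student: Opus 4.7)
The plan is the standard probabilistic method: choose a subspace $S \subset A \otimes B$ uniformly at random from the Grassmannian $\Gr_d(A \otimes B)$ and show that with positive probability \emph{every} unit vector $x \in S$ satisfies $H(\trace_B P_x) \geq \log d_A - \alpha - \beta$. The argument combines Page's formula for the mean entropy of a random reduced state, Levy's concentration of measure on the unit sphere of $A \otimes B$, and an $\e$-net covering of the unit sphere of $S$.

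First I would collect the two analytic inputs. Page's computation gives $\E[H(\trace_B P_x)] \geq \log d_A - \beta$ for a Haar-random unit vector $x \in A \otimes B$, with $\beta = d_A/(d_B \log 2)$. Fannes's inequality combined with the trace-norm bound $\|\trace_B P_x - \trace_B P_y\|_1 \leq \|P_x - P_y\|_1 \leq 2\|x-y\|_2$ shows that the map $f(x) = H(\trace_B P_x)$ is Lipschitz on the unit sphere with constant of order $\log d_A$. Levy's lemma then yields, for some absolute $c>0$,
\begin{equation*}
\P\bigl( f(x) \leq \log d_A - \alpha - \beta \bigr) \leq \exp\!\left( -\frac{c\, d_A d_B\, \alpha^{2}}{(\log d_A)^{2}} \right).
\end{equation*}

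Next, I would fix a $\delta$-net $\mathcal{N}$ on the unit sphere of a model $d$-dimensional subspace of $\C^{d_A d_B}$ (of cardinality at most $(5/\delta)^{2d}$), realize $S$ as the image of a Haar-random isometry $V : \C^d \hookrightarrow A \otimes B$, and apply the bound above pointwise to each $Vy$, $y \in \mathcal{N}$, which is marginally Haar-distributed on the unit sphere. A union bound gives
\begin{equation*}
\P\bigl( \exists y \in \mathcal{N} : f(Vy) \leq \log d_A - \alpha - \beta \bigr) \leq \Bigl(\frac{5}{\delta}\Bigr)^{2d} \exp\!\left( -\frac{c\, d_A d_B\, \alpha^{2}}{(\log d_A)^{2}} \right).
\end{equation*}
Choosing $\delta$ of order $\alpha/\log d_A$ makes the Lipschitz extension from $V(\mathcal{N})$ to the whole unit sphere of $S$ cost at most an extra $\alpha$ of entropy, and solving for the largest $d$ for which the above probability is strictly less than $1$ produces the claimed lower bound on $\dim S$.

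The delicate step, and the source both of the $\alpha^{2.5}/(\log d_A)^{2.5}$ scaling (rather than the naive $\alpha^{2}/(\log d_A)^{2}$) and of the explicit constant $\Gamma = 1/1753$, is the careful bookkeeping when balancing the net entropy $2d \log(5/\delta)$ against the concentration exponent. The stray factor $\log(\log d_A / \alpha)$ coming from the logarithm of the net size must be absorbed by slightly weakening the exponents of $\alpha$ and of $\log d_A$, and tracking the explicit constants from Fannes's inequality, Levy's lemma and the volumetric net estimate is what pins down the precise value of $\Gamma$. Everything else is the familiar random-subspace template, so this numerical optimization is where I expect the main obstacle to lie.
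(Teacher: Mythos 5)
This theorem is quoted from Hayden--Leung--Winter \cite{hayden-leung-winter} as background; the present paper does not reprove it, and only remarks that the original proof relies on concentration of measure and covering-number (net) estimates. Your reconstruction correctly mirrors exactly that strategy: Page-type lower bound on the mean entropy of a random reduced state, a Lipschitz estimate for $x \mapsto H(\trace_B P_x)$ via Fannes, Levy's lemma on the high-dimensional sphere, and a union bound over an $\e$-net on the unit sphere of the random subspace $S$, with the subspace realized as the range of a Haar isometry.

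The one step that your sketch leaves genuinely unresolved --- and you are right to flag it --- is the jump from the ``naive'' scaling $d \sim d_A d_B \,\alpha^2/(\log d_A)^2$ to the stated $d \sim d_A d_B\, \Gamma\, \alpha^{2.5}/(\log d_A)^{2.5}$. The arithmetic you describe gives
\begin{equation*}
d \lesssim \frac{d_A d_B\, \alpha^2}{(\log d_A)^2 \,\log\!\bigl(c \log d_A/\alpha\bigr)},
\end{equation*}
and the $\log(c\log d_A/\alpha)$ in the denominator is \emph{not} of order $\sqrt{\log d_A/\alpha}$ in general, so the extra half-power does not simply ``fall out.'' What Hayden--Leung--Winter actually do is overestimate this logarithm by a power (a deliberately lossy bound of the type $\log u \leq u^{1/2}$ for $u \geq 1$, applied with $u = c\log d_A/\alpha$) precisely so that the final statement is a clean power law in $\alpha$ and $\log d_A$; the resulting $2.5$ exponents and the constant $\Gamma = 1/1753$ are artifacts of that simplification together with the explicit constants from Fannes, Levy, and the volumetric net bound. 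Your sketch attributes the half-power to ``absorbing'' the log factor, which is the right intuition, but a complete proof needs to make that overestimate explicit rather than solve the bound exactly.
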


To prove this result, the authors require sophisticated methods from asymptotic geometry theory. In particular, they need estimates on the
covering numbers of unitary groups by balls of radius $\e$ and results of concentration of measure. 
The results of concentration of measure are applied to a specific measure of entanglement (e.g. one entropy $H^p$), 
therefore the measure of entanglement does not deal directly with the behavior of the Schmidt coefficients, but
rather with the behavior of a function of them.

\section{Confining the eigenvalues almost surely}
\label{sec:confining}

\subsection{Main result}

Our strategy is to describe a convex polyhedron $K$ inside the probability simplex $\Delta_k$ with the property that, for all $\e >0$, almost surely when $n$ goes to infinity, all input density matrices $\rho \geq 0, \trace(\rho)=1$, are mapped to output states $\Phi(\rho)$ whose spectra are contained $K+\e$, the $\e$-neighborhood of $K$.

For $t \in (0,1)$, let us first define the vector $\beta^{(t)} \in \R^k$, where  
\begin{equation}
\label{def-beta-t}
\beta^{(t)}_j = \phi\left(\frac j k,t \right) - \phi\left(\frac{j-1}{k},t\right), \quad \forall \; 1 \leq j \leq k.
\end{equation}
One can check directly that $\beta^{(t)}$ is a probability vector and that it is a non-increasing sequence. Moreover, $\beta^{(t)}_1 = \phi(1/k, t)$ and $\beta^{(t)}_j = 0$ for $j \geq \floor{k(1-t)}+2$.

Since the \emph{majorization} partial order plays an important role in this situation, let us remind here the definition and some basic properties of this relation. For two probability vectors $x, y \in \Delta_k$, 
we say that $x$ is \emph{majorized} by $y$ (and we write $x \prec y$) 
iff for all $j \in \{1, \ldots, k\}$
\begin{equation} \label{defmaj}
s_j(x) = \sum_{i=1}^{j}{x^\downarrow_i} \leq \sum_{i=1}^{j}{y^\downarrow_i} = s_j(y),
\end{equation}
where $x^\downarrow$ and $y^\downarrow$ are the decreasing rearrangements of $x$ and $y$; note that for $j=k$ we actually have an equality, since $x$ and $y$ are probability vectors. We extend the functions $s_j$, by functional calculus, to selfadjoint matrices $X \in \M_k(\C)$. The majorization relation can also be characterized in the following way: for a probability vector $y$ and a permutation $\sigma \in \S_k$, denote by $\sigma .y$ the vector obtained by permuting the coordinates of $y$ along $\sigma$ : $(\sigma . y)_i = y_{\sigma(i)}$. Then
\[x \prec y \text{ iff. } x \in S(y),\]
where $S(y)$ is the convex hull of the set $\{\sigma . y \, | \, \sigma \in \S_k\}$. Moreover, the extremal points of $S(y)$ are exactly $y$ and its permutations $\sigma.y$. In Figure \ref{fig:beta_simplex}, we plot $\Delta_3$, the 2-dimensional simplex together with the sets $S(\beta^{(t)})$, for $t=1/k'$ and $k' = 2, 3, 4, 5, 10, 20, 50, 100$. Notice that for $k' = 2,3$, the set $S(\beta^{(1/k')})$ touches the triangle $\Delta_3$, because of the fact that $\beta^{(t)}$ has in this case a zero coordinate.

\begin{figure}[ht]
\includegraphics[width=0.4\textwidth]{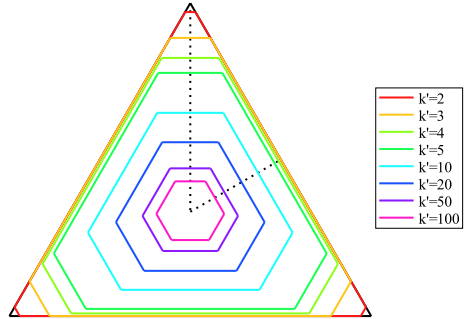}
\caption{The 2-dimensional probability simplex with the sets $S(\beta^{(t)})$, for $t=1/k'$ and $k' = 2, 3, 4, 5, 10, 20, 50, 100$.}
\label{fig:beta_simplex}
\end{figure}

We can now state the main result of this section:

\begin{theorem}
\label{thm:main-thm}
Let $t$ be a parameter in $(0,1)$ and $\e >0$.
Let $S(\beta^{(t)})+\e$ be the $\e$-ball around $S(\beta^{(t)})$ in $\Delta_k$.
Then, almost surely when $n \to \iy$, for all input density matrix $\rho$,
\begin{equation}\label{eq:main-thm}
\spec(\Phi(\rho)) \in S(\beta^{(t)})+\e.
\end{equation}
Moreover, $\beta^{(t)}$ is optimal: a probability vector $\beta \in \Delta_k$ such that, with positive probability,
\begin{equation}\label{eq:main-thm-optimal}
\spec(\Phi(\rho)) \in S(\beta)+\e \quad\forall \rho
\end{equation}
must satisfy $\beta^{(t)} \prec \beta$.
\end{theorem}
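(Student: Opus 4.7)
The plan is to convert the inclusion $\spec(\Phi(\rho))\in S(\beta^{(t)})+\e$ into a uniform upper bound on partial sums $s_j$ of sorted eigenvalues, reduce to pure inputs by convexity of $s_j$, rewrite the resulting extremal problem as the operator norm of a compressed projector, and invoke Theorem~\ref{thm:ptrf}. A probability vector $x$ belongs to $S(\beta^{(t)})$ iff $s_j(x)\leq s_j(\beta^{(t)})$ for all $j$, and the $\e$-neighborhood in $\Delta_k$ is characterized, up to a dimension-dependent constant, by the same inequalities with slack $O(\e)$. Using the telescoping identity $s_j(\beta^{(t)})=\sum_{i=1}^j[\phi(i/k,t)-\phi((i-1)/k,t)]=\phi(j/k,t)$, the first part reduces to showing that almost surely
\[
\sup_{\rho}\,s_j(\Phi(\rho))\;\leq\;\phi(j/k,t)+o(1)\qquad\text{for each }j\in\{1,\ldots,k\}.
\]

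Since $s_j(A)=\max_{W\in\Gr_j(\C^k)}\trace(P_W A)$ is convex in the self-adjoint $A$, the sup over $\rho$ is attained on pure states. For a unit $x\in\C^{p_n}$, $\Phi(P_x)=\trace_n P_y$ with $y:=U_n\chi_n(x)$ running over the unit sphere of the random subspace $V_n:=U_n\chi_n(\C^{p_n})\subset\C^n\otimes\C^k$, uniformly distributed on $\Gr_{p_n}(\C^n\otimes\C^k)$ with $p_n\sim tkn$. A second use of Ky Fan, inside $\C^k$, yields
\[
s_j(\trace_n P_y)=\max_{W\in\Gr_j(\C^k)}\langle y,(I_n\otimes P_W)y\rangle,
\]
and optimising over unit $y\in V_n$ for fixed $W$ gives
\[
\sup_{y\in V_n,\,\norm{y}=1}\langle y,(I_n\otimes P_W)y\rangle=\norm{\pi_{V_n}(I_n\otimes P_W)\pi_{V_n}}_\infty,
\]
where $\pi_{V_n}$ is the projection onto $V_n$. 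The matrix $I_n\otimes P_W$ is a rank-$jn$ projector of $\C^{nk}$ and $\pi_{V_n}$ is an independent Haar projector of rank $p_n$, so Theorem~\ref{thm:ptrf} with $\alpha=j/k$ and $\beta=t$ gives the almost sure pointwise limit $\phi(j/k,t)$.

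The final step is to exchange the supremum over $W$ with this pointwise almost sure convergence. The key observation is that $\Gr_j(\C^k)$ is a compact manifold of real dimension $2j(k-j)$, \emph{independent of $n$}, and $W\mapsto\norm{\pi_{V_n}(I_n\otimes P_W)\pi_{V_n}}_\infty$ is $1$-Lipschitz in $P_W$ uniformly in $n$; a $\delta$-net plus union bound over finitely many $W$ therefore upgrades pointwise to uniform convergence. For the optimality clause, the matching lower bound in Theorem~\ref{thm:ptrf} yields, for each $j$ and each fixed $W$, unit vectors $y_j\in V_n$ with $\langle y_j,(I_n\otimes P_W)y_j\rangle\to\phi(j/k,t)$, hence pure states $\rho_j$ with $s_j(\Phi(\rho_j))\to s_j(\beta^{(t)})$; substituting these into the hypothesis on $\beta$ forces $s_j(\beta)\geq s_j(\beta^{(t)})$ for every $j$, i.e.\ $\beta^{(t)}\prec\beta$. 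The main obstacle is precisely this uniformity step: passing from pointwise control at each fixed $W$ to uniform control. The strategy works only because the $W$-Grassmannian is of bounded dimension as $n\to\infty$; were the sup instead taken over all rank-$jn$ projectors of $\C^{nk}$, the net argument would break down and a finer concentration-of-measure input (in the spirit of Theorem~\ref{thm-hayden-leung-winter}) would be required.
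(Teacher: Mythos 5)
Your proposal is correct and follows essentially the same route as the paper's proof: reduce to pure inputs by convexity of the Ky Fan functionals $s_j$, rewrite $\sup_{y\in V_n}\langle y,(I_n\otimes P_W)y\rangle$ as $\norm{\pi_{V_n}(I_n\otimes P_W)\pi_{V_n}}_\infty$, invoke Theorem~\ref{thm:ptrf} at each fixed $W$, and pass from pointwise to uniform control over $\Gr_j(\C^k)$ via a finite $\delta$-net plus the uniform $1$-Lipschitz bound (this is precisely the content of Lemmas~\ref{lem:elementaire}, \ref{lem:single-Fi}, and \ref{lem:estimate-grassmannian}), with the converse obtained by fixing a single $F_0$ and using the matching lower bound from Theorem~\ref{thm:ptrf}.
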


We split the proof of Theorem \ref{thm:main-thm} into several lemmas.
The first one is an easy consequence of the definition of the operator norm.

\begin{lemma}
\label{lem:elementaire}
Let $Q,R$ be two selfadjoint projections in $\M_n(\C)$. Then
\[\norm{QRQ}_{\iy}=\max_{x \in \im Q} \trace (P_xR).\]
\end{lemma}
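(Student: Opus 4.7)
The plan is to reduce the operator-norm quantity $\norm{QRQ}_\iy$ to a supremum of quadratic forms and then recognize the geometric content of $\trace(P_x R)$. Since $Q$ and $R$ are selfadjoint projections, $QRQ$ is positive semidefinite, because for every $y\in\C^n$,
\[
\langle y,\ QRQy\rangle \;=\; \langle Qy,\ RQy\rangle \;=\; \|RQy\|^2 \;\geq\; 0,
\]
using $R^2=R=R^*$. Therefore $\norm{QRQ}_\iy = \sup_{\|y\|=1}\langle y,QRQy\rangle = \sup_{\|y\|=1}\|RQy\|^2$.

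Next, I would restrict this supremum to unit vectors in $\im Q$. For any unit $y$ with $Qy\neq 0$, the vector $x := Qy/\|Qy\|$ is a unit vector in $\im Q$, and
\[
\|RQy\|^2 \;=\; \|Qy\|^2\,\|Rx\|^2 \;\leq\; \|Rx\|^2,
\]
while the reverse inequality is immediate since any unit $x\in\im Q$ satisfies $Qx=x$ and is therefore an admissible choice of $y$. Hence
\[
\norm{QRQ}_\iy \;=\; \sup_{x\in\im Q,\ \|x\|=1}\|Rx\|^2,
\]
and the supremum is actually attained, by compactness of the unit sphere of $\im Q$ in finite dimension, which justifies writing $\max$ in the statement.

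Finally, I would identify $\|Rx\|^2$ with $\trace(P_x R)$: for a unit vector $x$, $P_x=\ketbra{x}{x}$ and so
\[
\trace(P_x R) \;=\; \trace\bigl(\ketbra{x}{x} R\bigr) \;=\; \langle x,\ Rx\rangle \;=\; \|Rx\|^2,
\]
again by $R=R^2=R^*$. Chaining the three equalities yields the claim. There is no substantive obstacle here; the only implicit convention I am reading into the statement is that $x$ ranges over unit vectors of $\im Q$ (otherwise $P_x$ would not be a rank-one projector and the right-hand side would not be the intended quantity). The lemma is essentially a dictionary entry translating the operator-norm side of Theorem \ref{thm:ptrf} into the "best vector in the image" side that the later arguments in Section \ref{sec:confining} will apply to.
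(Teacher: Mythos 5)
Your proof is correct and follows essentially the same route as the paper: both reduce $\norm{QRQ}_\iy$ to the supremum of the quadratic form $\scalar{RQy}{Qy}$ over the unit ball, change variables from $y$ to the normalized vector $Qy/\|Qy\|$ in $\im Q$, and identify $\scalar{Rx}{x}=\trace(P_x R)$. You merely spell out a few details the paper leaves implicit (positivity of $QRQ$ so that the absolute value can be dropped, the normalization convention on $x$, and attainment of the supremum by compactness).
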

\begin{proof} Since $QRQ$ is a self adjoint operator, we have:
\begin{align*}
\norm{QRQ}_\iy &= \sup_{\norm y \leq 1} \scalar{QRQy}{y} = \sup_{\norm y \leq 1} \scalar{RQy}{Qy} \\
&= \sup_{\substack{x \in \im Q \\ \norm x \leq 1}} \scalar{Rx}{x} = \max_{x \in \im Q} \trace(P_x R).
\end{align*}
\end{proof}

The following lemma is a reformulation of the min-max theorem:
\begin{lemma}
\label{lem:single-Fi}
Let $\lambda_1 \geq \lambda_2 \geq \cdots \geq  \lambda_k$ be the Schmidt coefficients of a vector $x \in \C^{nk}$. Then, for all $1 \leq j \leq k$,
\[s_j(x) = \lambda_1 +\lambda_2 + \cdots + \lambda_j = \max_{F \in \Gr_j(\C^k)} \trace(P_xP_{\C^n \otimes F}).\]
\end{lemma}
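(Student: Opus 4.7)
My plan is to rewrite the trace as a trace of the reduced density matrix against a projection, and then apply Ky Fan's maximum principle (which is essentially what the min-max theorem for eigenvalues gives in this setting).

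More concretely, I would first observe that for any $j$-dimensional subspace $F \subset \C^k$ one has
\[
\trace(P_x P_{\C^n \otimes F}) = \langle x, (I_n \otimes P_F) x\rangle = \trace\bigl(P_F \, \trace_n(P_x)\bigr),
\]
where $\rho_B := \trace_n(P_x) \in \M_k(\C)$ is the reduced density matrix on the second factor. This step is just cyclicity of the trace combined with the definition of the partial trace. Using the Schmidt decomposition $x = \sum_{i=1}^{\rk(x)} \sqrt{\lambda_i}\, e_i \otimes f_i$, one immediately reads off that $\rho_B = \sum_i \lambda_i \ketbra{f_i}{f_i}$, so the $\lambda_i$'s are precisely the non-zero eigenvalues of $\rho_B$.

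The problem then reduces to maximizing $\trace(P_F \rho_B)$ over $F \in \Gr_j(\C^k)$. Writing $a_i := \|P_F f_i\|^2 \in [0,1]$, the identity above gives $\trace(P_F \rho_B) = \sum_i \lambda_i a_i$, while on the other hand $\sum_i a_i = \trace\bigl(P_F \sum_i \ketbra{f_i}{f_i}\bigr) \leq \trace(P_F) = j$ because $\sum_i \ketbra{f_i}{f_i}$ is a subprojection of $I_k$. Since the $\lambda_i$'s are ordered in decreasing fashion, the linear program $\max \sum_i \lambda_i a_i$ subject to $0\leq a_i \leq 1$, $\sum a_i \leq j$ is maximized at $a_1 = \cdots = a_j = 1$, giving the upper bound $s_j(x)$.

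The bound is attained by taking $F := \mathrm{span}(f_1, \ldots, f_j)$ (completing the $f_i$'s to an orthonormal basis of $\C^k$ if $\rk(x) < j$), for which $a_i = \mathbf{1}_{i \leq j}$ and $\trace(P_F \rho_B) = \lambda_1 + \cdots + \lambda_j$. There is no real obstacle here beyond bookkeeping; the only mild point worth being careful about is the case $\rk(x) < j$, which is handled by the convention that the extra Schmidt coefficients are zero so that the equality $s_j(x) = \sum_{i=1}^{j} \lambda_i$ still makes sense.
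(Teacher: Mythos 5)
Your argument is correct and follows essentially the same route as the paper: reduce $\trace(P_x P_{\C^n\otimes F})$ to $\trace\bigl(P_F \,\trace_n(P_x)\bigr)$ via the partial-trace adjoint identity, and then maximize over $j$-dimensional subspaces $F$ using the Ky Fan (min-max) principle for the eigenvalues of the reduced density matrix. The only difference is cosmetic: the paper simply invokes the min-max theorem, while you spell out the Ky Fan step by a short linear-programming argument in the coefficients $a_i = \|P_F f_i\|^2$, which is a perfectly fine self-contained substitute.
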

\begin{proof}
 
Since $\lambda_i$ are the eigenvalues of $\trace_n P_x \in \M_k(\C)$,
 the min-max theorem for $\trace_n P_x$ can be stated as:
\[s_j(x)=\max_{F\in \Gr_j(\C^k)}\trace ( P_F\trace_n P_x ).\]
The conditional expectation property of the partial trace
implies that 

\[s_j(x)=\max_{F\in \Gr_j(\C^k)} \trace(P_x \cdot \I_n  \otimes P_F)
=\max_{F\in \Gr_j(\C^k)} \trace(P_x \cdot P_{\C^n \otimes F}).\]
 \end{proof}

We are interested in majorization inequalities which hold uniformly for all norm one elements of a subspace $V$. In other words, we are interested in the quantity
\[\max_{\substack{x \in V \\ \norm{x} = 1}} s_j(x) = \max_{\substack{x \in V \\ \norm{x} = 1}} \max_{F \in \Gr_j(\C^k)} \trace(P_xP_{\C^n \otimes F}).\]
Since $k$ is a fixed parameter of our model, in order to compute the maximum over the Grassmannian, it suffices to consider only a finite number of subspaces $F$:
\begin{lemma}
\label{lem:estimate-grassmannian}
For all $\e >0$, for all $j$, there exists a finite number of $j$-dimensional subspaces $F_1, \ldots, F_N \in \Gr_j(\C^k)$ 
such that, for all $x \in \C^{nk}$,
\[\max_{i=1}^N \trace(P_x P_{\C^n \otimes F_i}) \leq s_j(x) \leq \max_{i=1}^N \trace(P_x P_{\C^n \otimes F_i}) + \e.\]
\end{lemma}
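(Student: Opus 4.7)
The plan is to reduce the claim to a uniform continuity estimate on the compact Grassmannian $\Gr_j(\C^k)$ and then invoke its compactness to extract a finite net. Since the dimension $k$ is fixed and independent of $n$, the Grassmannian $\Gr_j(\C^k)$ is a fixed compact manifold, and the map sending a subspace $F \in \Gr_j(\C^k)$ to its orthogonal projector $P_F \in \M_k(\C)$ is continuous with respect to the operator norm. Hence for every $\e > 0$ there is a finite family $F_1, \ldots, F_N \in \Gr_j(\C^k)$ such that for every $F \in \Gr_j(\C^k)$ one can choose an index $i$ with $\norm{P_F - P_{F_i}}_\iy \leq \e$.

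The key Lipschitz estimate is then the following: for any unit vector $x \in \C^n \otimes \C^k$ and any $F, F' \in \Gr_j(\C^k)$,
\begin{equation*}
\left| \trace\bigl(P_x\, P_{\C^n \otimes F}\bigr) - \trace\bigl(P_x\, P_{\C^n \otimes F'}\bigr) \right| = \left| \trace\bigl(P_x \cdot \I_n \otimes (P_F - P_{F'})\bigr) \right| \leq \norm{P_F - P_{F'}}_\iy ,
\end{equation*}
where the inequality uses H\"older's inequality (the trace norm of the rank one projector $P_x$ equals $1$) and the fact that tensoring with $\I_n$ preserves the operator norm.

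Combining these two ingredients with Lemma \ref{lem:single-Fi}, which identifies $s_j(x)$ with the maximum of $\trace(P_x P_{\C^n \otimes F})$ over $F \in \Gr_j(\C^k)$, we obtain on the one hand the trivial lower bound $\max_{i=1}^N \trace(P_x P_{\C^n \otimes F_i}) \leq s_j(x)$, and on the other hand, for any $F$ attaining (or nearly attaining) the maximum and the index $i$ with $\norm{P_F - P_{F_i}}_\iy \leq \e$, the upper bound $s_j(x) \leq \trace(P_x P_{\C^n \otimes F_i}) + \e \leq \max_{i=1}^N \trace(P_x P_{\C^n \otimes F_i}) + \e$. Since the estimate is homogeneous of degree $2$ in $\norm{x}$, there is no loss of generality in assuming $\norm{x} = 1$, and the lemma follows. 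The only mildly delicate point is verifying that the map $F \mapsto P_F$ is uniformly continuous on $\Gr_j(\C^k)$, which however is immediate from the compactness of the Grassmannian.
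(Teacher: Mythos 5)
Your proof is correct and follows essentially the same route as the paper: the lower bound is immediate from Lemma~\ref{lem:single-Fi}, and the upper bound comes from choosing a finite $\e$-net of the compact Grassmannian $\Gr_j(\C^k)$ in the metric $d(E,F)=\norm{P_E-P_F}_\iy$ and combining it with the Lipschitz estimate $\module{\trace(P_x\cdot \I_n\otimes(P_F-P_{F'}))}\leq\norm{P_F-P_{F'}}_\iy$. Your explicit invocation of H\"older's inequality (with $\norm{P_x}_1=1$ and $\norm{\I_n\otimes(P_F-P_{F'})}_\iy=\norm{P_F-P_{F'}}_\iy$) is in fact slightly more careful than the paper's write-up, which states this bound as an equality where an inequality is what is actually used.
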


Note that in Lemma \ref{lem:estimate-grassmannian},
 $N$ does depend on $\e$ but can be chosen to be finite 
for any $\e >0$. 

\begin{proof}
We only need to prove the second inequality. Since the Grassmannian $\Gr_j(\C^k)$ is compact and metric for $d(E, F) = \norm{P_E - P_F}_{\infty}$, for all $\e >0$ there exists a covering of $\Gr_j(\C^k)$ by a finite number of balls of radius $\e$ centered in $F_1, \ldots, F_N$. Fix some $x \in \C^{nk}$ and consider the element $F \in \Gr_j(\C^k)$ for which the maximum in the definition of $s_j(x)$ is attained. $F$ is inside some ball centered at $F_i$ and we have
\[\trace(P_x P_{\C^n \otimes F}) \leq  \trace(P_x P_{\C^n \otimes F_i}) + \module{\trace(P_x (P_{\C^n \otimes F}) - P_{\C^n \otimes F_i}))} =\]
\[= \trace(P_x P_{\C^n \otimes F_i}) + \norm{P_F - P_{F_i}}_{\iy}\leq \trace(P_x P_{\C^n \otimes F_i}) + \e,\]
and the conclusion follows.
\end{proof}

Now we are ready to prove Theorem \ref{thm:main-thm}.

\begin{proof}[Proof of Theorem \ref{thm:main-thm}]
First, notice that it suffices to show \eqref{eq:main-thm} holds for rank one projectors $\rho = P_x$. The general case follows from the convexity of the functions $s_1, \ldots, s_k$. 

Let $\varepsilon >0$ and $j\in \{1,\ldots ,k\}$. For a random subspace $V \subset \C^{nk}$ of dimension $p_n\sim tnk$,
\[\max_{\substack{x \in V \\ \norm{x} = 1}} s_j(x) = \max_{\substack{x \in V \\ \norm{x} = 1}} \max_{F \in \Gr_j(\C^k)} \trace(P_xP_{\C^n \otimes F}).\]
Using the compactness argument in Lemma \ref{lem:estimate-grassmannian}, one can consider (at a cost of $\e$) only a finite number of subspaces $F$:
\[\max_{\substack{x \in V \\ \norm{x} = 1}} s_j(x) \leq \max_{i=1}^N \max_{\substack{x \in V \\ \norm{x} = 1}} \trace(P_xP_{\C^n \otimes F_i}) + \e.\]
According to Theorem \ref{thm:ptrf},
for all $i\in \{1,\ldots N\}$, almost surely when $n \to \iy$,
\[\lim_n \norm{P_V P_{\C^n\otimes F_i}P_V}_\iy=\phi (j/k, t).\]
Since $N$ is finite, with probability one, the above equality is true \emph{for all} $i$. Next, using Lemma \ref{lem:elementaire}, one has that, almost surely,
\[\limsup_n \max_{\substack{x \in V \\ \norm{x} = 1}} s_j(x)\leq \phi (j/k, t)+\varepsilon,\] 
which concludes the proof of the direct implication.

Conversely, let $\beta \in \Delta_k$ be a probability vector which satisfies Equation \eqref{eq:main-thm-optimal}. For $j \in \{1, 2, \ldots, k\}$ fixed, let $F_0$ be a subspace of $\C^k$ of dimension $j$. We have
\begin{align*}
\max_{\substack{x \in V \\ \norm{x} = 1}} s_j(x) &= \max_{\substack{x \in V \\ \norm{x} = 1}} \max_{F \in \Gr_j(\C^k)} \trace(P_xP_{\C^n \otimes F}) \\
&\geq \max_{\substack{x \in V \\ \norm{x} = 1}}  \trace(P_xP_{\C^n \otimes F_0}) = \norm{P_V P_{\C^n\otimes F_0}P_V}_\iy \xrightarrow[n \to \iy]{\text{a.s.}} \phi (j/k, t).
\end{align*}
Since, with positive probability, $\max_{\substack{x \in V \\ \norm{x} = 1}} s_j(x) \leq s_j(\beta)+\e$, we conclude that $s_j(\beta) \geq \phi (j/k, t) = s_j(\beta^{(t)})$ and the proof is complete.
\end{proof}

The interest of Theorem 
\ref{thm:main-thm} in comparison to 
Theorem \ref{thm-hayden-leung-winter} is that
it  does not rely specifically on one measurement of 
entanglement, as we are able to 
confine almost surely the eigenvalues in a convex set.
Also, our argument relies neither on 
concentration inequalities nor on
net estimates, as we fix $k$.
However, unlike Theorem
\ref{thm-hayden-leung-winter}, 
Theorem \ref{thm:main-thm}
does not give explicit control on $n$.
It is theoretically possible to give an explicit control on $n$ (using techniques
introduced in \cite{ledoux}), but 
this would lead to considerably involved technicalities.

\subsection{Application to Entropies}

Once the eigenvalues of the output of a channel have been confined inside a fixed convex polyhedron, entropy inequalities follow easily. Indeed, the confining polyhedron is defined in terms of the majorization partial order, and thus the notion of Schur-convexity (see \cite{bhatia}) is crucial in what follows.

A function $f : \R^k \to \R$ is said to be Schur-convex if $x \prec y$ implies $f(x) \leq f(y)$. 
The R\'enyi entropies $H^p$ are Schur-concave, and thus majorization relations $x \prec y$ imply $H^p(x) \geq H^p(y)$ for all $p \geq 1$. The reciprocal implication has been studied in \cite{aubrun-nechita, aubrun-nechita2}: entropy inequalities $H^p(x) \geq H^p(y)$ (for all $p \geq 1$) characterize a weaker form of majorization called \emph{catalytic majorization}, which has applications in LOCC protocols for the transformation of bipartite states.

For the purposes of this paper, the main corollary of 
Theorem \ref{thm:main-thm}  is the following
 
\begin{theorem}
\label{thm:application-entropies}
For a fixed parameter $t$, almost surely, when $n \to \iy$, for all input $\rho \in \D_{tnk}$, 
$$\liminf_n H^p_{\min}(\Phi_U) \geq H^p(\beta^{(t)}).$$
\end{theorem}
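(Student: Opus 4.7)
The plan is to derive Theorem \ref{thm:application-entropies} directly from Theorem \ref{thm:main-thm} by combining the almost-sure spectral confinement with the Schur-concavity of the R\'enyi entropy $H^p$ and its continuity on the probability simplex $\Delta_k$.

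First I would fix $\varepsilon > 0$ and invoke Theorem \ref{thm:main-thm} to obtain a full-measure event $\Omega_\varepsilon$ on which, for all $n$ large enough and every input density matrix $\rho \in \D_{p_n}$, the output spectrum $\lambda = \spec(\Phi_n(\rho))$ lies in $S(\beta^{(t)}) + \varepsilon$, the $\varepsilon$-neighborhood (inside $\Delta_k$) of the permutation polytope $S(\beta^{(t)})$.

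Next I would translate this confinement into the desired entropy bound. Using the characterization $S(\beta^{(t)}) = \{\mu \in \Delta_k : \mu \prec \beta^{(t)}\}$ recalled in the paper, together with the Schur-concavity of $H^p$ (a quick case analysis using the convexity/concavity of $x \mapsto x^p$ and the sign of $1/(1-p)$), every $\mu \in S(\beta^{(t)})$ satisfies $H^p(\mu) \geq H^p(\beta^{(t)})$. Since $H^p$ is continuous on the compact simplex $\Delta_k$ for every $p>0$ (the inner sum $\sum_i x_i^p$ stays bounded and bounded away from $0$, and for $p=1$ one uses the convention $0\log 0 = 0$), it admits a uniform modulus of continuity $\delta(\varepsilon) \to 0$ as $\varepsilon \to 0$. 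Consequently, for every $\lambda \in S(\beta^{(t)}) + \varepsilon$,
\[
H^p(\lambda) \geq H^p(\beta^{(t)}) - \delta(\varepsilon).
\]

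Finally, taking the minimum over $\rho$ on $\Omega_\varepsilon$ gives $H^p_{\min}(\Phi_n) \geq H^p(\beta^{(t)}) - \delta(\varepsilon)$ for all $n$ large, so $\liminf_n H^p_{\min}(\Phi_n) \geq H^p(\beta^{(t)}) - \delta(\varepsilon)$ almost surely. Intersecting the events $\Omega_{\varepsilon_m}$ over a countable sequence $\varepsilon_m \downarrow 0$ removes the $\delta(\varepsilon)$ on a set of full probability and delivers the theorem. The proof is essentially routine once Theorem \ref{thm:main-thm} is available; the only mild subtlety I would have to watch for is the behavior of $H^p$ near boundary points of $\Delta_k$ (relevant when $\beta^{(t)}$ has zero coordinates, i.e.\ for $t$ close to $1$), but this is handled by plain continuity on the compact simplex.
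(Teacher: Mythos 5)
Your argument is correct and follows exactly the route the paper intends, which it compresses to a one-line appeal to Theorem \ref{thm:main-thm} plus Schur-concavity of $H^p$; you simply spell out the implicit details (uniform continuity of $H^p$ on $\Delta_k$ to absorb the $\varepsilon$-neighborhood, and the countable intersection to pass to $\varepsilon\to 0$ almost surely), all of which is sound.
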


\begin{proof}
This follows directly from Theorem \ref{thm:main-thm} and from the Schur-concavity of the R\'enyi entropies.
\end{proof}

\section{New examples and counterexamples of superadditive channels}
\label{sec:cex}

Since our main result, Theorem \ref{thm:main-thm}, is valid almost surely in the limit $n \to \iy$, the limiting objects depend only on the (\emph{a priori} fixed) parameters $k$ and $t$. In what follows, we consider large values of the parameter $k$, and introduce the ``little-o'' notation $o(\cdot)$ with respect to the limit $k \to \iy$. 

\subsection{Superadditivity}

We start with a crucial recent series of result, which we summarize into the following theorem:

\begin{theorem} 
For all $p\geq 1$, there exist
quantum channels $\Phi_1$ and $\Phi_2$ such that
\begin{equation}\label{eq:additivity_H1}
H^p_{\min}(\Phi_1 \otimes \Phi_2) < H^p_{\min}(\Phi_1) + H_{\min}(\Phi_2).
\end{equation}
\end{theorem}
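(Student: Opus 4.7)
The plan is to recognize that this statement compiles two independent counterexamples from the literature, so the proof amounts to a pointer to \cite{hayden-winter} for $p>1$ and \cite{hastings} for $p=1$. The Hayden--Winter strategy is the template that the rest of Section \ref{sec:cex} will refine, however, so I sketch it here. For fixed $k$ and $t\in (0,1)$, draw a random channel $\Phi_n$ as in Section \ref{subsec:random-quantum-channel-model} and set $\Phi_1=\Phi_n$, $\Phi_2=\bar\Phi_n$, where $\bar\Phi_n$ is the entrywise complex conjugate channel.

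For the upper bound on $H^p_{\min}(\Phi_1\otimes\Phi_2)$, I would evaluate the tensor product channel on the maximally entangled Bell state in $\C^{p_n}\otimes\C^{p_n}$. Conjugation forces this particular input to be sent to a state whose spectrum --- as computed in the authors' previous paper \cite{collins-nechita} --- contains one large eigenvalue of order $t$ together with many small eigenvalues of order $(1-t)/k^2$, yielding a $p$-R\'enyi entropy close to $-\log t$ for large $k$. For the lower bound on $H^p_{\min}(\Phi_1)+H^p_{\min}(\Phi_2)=2H^p_{\min}(\Phi_n)$, the original Hayden--Winter proof uses concentration of measure on the unitary group, but in the present paper the sharper substitute is Theorem \ref{thm:application-entropies}, which asserts that almost surely $H^p_{\min}(\Phi_n)\geq H^p(\beta^{(t)})-o(1)$. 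A direct calculation on the explicit vector $\beta^{(t)}$ from \eqref{def-beta-t} then produces the strict inequality \eqref{eq:additivity_H1} for a suitable choice of parameters.

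The case $p=1$ lies outside the reach of the Bell-state trick, since $-\log t$ is then no longer small enough compared with the single-copy lower bound. For it I would simply invoke Hastings's construction \cite{hastings}, which produces the required violation of additivity of the von Neumann entropy using a genuinely different, non-Haar random model. Nothing in the subsequent arguments of this paper will rely on the $p=1$ case.

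The main obstacle --- and the one that the remainder of Section \ref{sec:cex} is designed to overcome --- is to close the gap between $2H^p(\beta^{(t)})$ and the Bell-state output entropy with enough precision to allow the output dimension $k$ to be taken arbitrarily small, down to the qubit case $k=2$ promised in Corollary \ref{cor-explicite}. This will require both the fine bi-channel estimates of \cite{collins-nechita} for the Bell-state output and a careful asymptotic analysis of $H^p(\beta^{(t)})$ as $t\to 0$ with $k$ fixed.
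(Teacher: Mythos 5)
Your proposal is correct and matches the paper's approach exactly: the paper offers no proof beyond the remark "This theorem results mainly from the papers \cite{hastings, hayden, hayden-winter}," i.e. it is a pointer to the literature with Hastings handling $p=1$ and Hayden/Hayden--Winter handling $p>1$, just as you say. Your supplementary sketch of the Bell-state strategy is accurate in spirit (one small slip: the $p$-R\'enyi entropy of the Bell-output spectrum tends to $\tfrac{p}{p-1}(-\log t)$, not $-\log t$), and your observation that the Bell-state trick fails at $p=1$ is confirmed explicitly by the paper's computation at the end of Section~\ref{sec:cex}.
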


This theorem results mainly from the papers \cite{hastings, hayden,hayden-winter}.
Note that the equality
$$H^p_{\min}(\Phi_1 \otimes \Phi_2) = H^p_{\min}(\Phi_1) + H_{\min}(\Phi_2)$$
for any $\Phi_1$ and $\Phi_2$ and any $p>1$ was a conjecture until 2007, and that even nowadays, no concrete
counterexamples are known for $p$ small or $p=1$.

\subsection{The Bell phenomenon}

In order to provide counterexamples for the additivity conjectures, one has to produce lower bounds for the minimum output entropy of single copies of the channels (and this is where Theorem \ref{thm:main-thm} is useful) and upper bounds for the minimum output entropy of the tensor product of the quantum channels. The latter task is somewhat easier, since one has to exhibit a particular input state such that the output has low entropy. 

The choice of the input state for the product channel is guided by the following observation. It is clear that if one chooses a product input state $\rho = \rho_1 \otimes \rho_2$, then the output state is still in product form, and the entropies add up:
\[H^p([\Phi_1 \otimes \Phi_2](\rho_1 \otimes \rho_2)) = H^p(\Phi_1(\rho_1) \otimes \Phi_2(\rho_2)) = H^p(\Phi_1(\rho_1)) + H^p(\Phi_2(\rho_2)).\]
Hence, such choices cannot violate the additivity of R\'enyi entropies. Instead, one has to look at \emph{entangled} states, and the maximally entangled states are obvious candidates. 

All our examples rely on the study of the product of conjugate channels
$$\Phi_n\otimes\overline\Phi_n$$
where 
$$\Phi_n (X)=\trace_n(U_n\chi_n(X)U_n^*), \quad \overline\Phi_n (X)=\trace_n(\overline U_n\chi_n(X)U_n^t)$$
have been introduced in subsection \ref{subsec:random-quantum-channel-model}.
Our task is to obtain a good upper bound for
$$\limsup_n H_{min}^p (\Phi_n\otimes\overline\Phi_n).$$
Our strategy is systematically to write
$$\limsup_n H_{min}^p (\Phi_n\otimes\overline\Phi_n)
\leq H_{min}^p( \Phi_n\otimes\overline\Phi_n(E_{tnk}))$$
where $E_{tnk}$ is the maximally entangled state over the input space $(\C^{tnk})^{\otimes 2}$. More precisely, $E_{tnk}$ is the projection on the Bell vector
\[\Bell_{tnk} = \frac{1}{\sqrt{tnk}}\sum_{i=1}^{tnk} e_i \otimes e_i,\]
where $\{e_i\}_{i=1}^{tnk}$ is a fixed basis of $\C^{tnk}$. Using the graphical formalism of \cite{collins-nechita}, we are dealing with the diagram in Figure \ref{fig:bi_channel_UUbarmodifie} (recall that square symbols correspond to $\C^k$, round symbols correspond to $\C^n$, diamond ones to $\C^{tnk}$ and triangle-shaped symbols correspond to $\C^{1/t}$).

\begin{figure}[ht]
\includegraphics{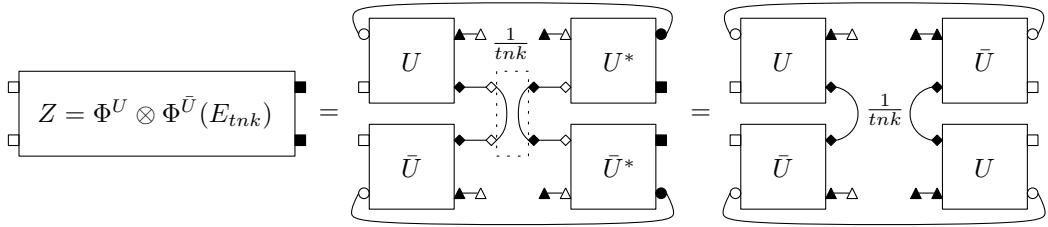}
\caption{$Z_n = \Phi^U \otimes \Phi^{\bar U} (E_{tnk})$}
\label{fig:bi_channel_UUbarmodifie}
\end{figure}

The random matrix $\Phi_n\otimes\overline\Phi_n(E_{tnk})$
was thoroughly studied in our previous paper \cite{collins-nechita}
and we recall here one of the main results of this paper:

\begin{theorem}\label{thm:eig_product}
Almost surely, as $n \to \iy$, the random matrix $\Phi_n\otimes\overline\Phi_n(\Bell_{tnk}) \in \M_{k^2}(\C)$ has eigenvalues
\[\gamma^{(t)} = \left( t + \frac{1-t}{k^2},\underbrace{\frac{1-t}{k^2}, \ldots, \frac{1-t}{k^2}}_{k^2-1 \text{ times}}\right).\]
\end{theorem}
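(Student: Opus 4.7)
The plan is to reformulate $Z_n$ as a function of a single Haar-random projection on $\C^{nk}$, and then obtain its almost sure limit by combining a first-moment computation with concentration on the unitary group.

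First, I would expose the Stinespring picture explicitly. Write $\Phi_n(X) = \trace_n(V_n X V_n^*)$ where $V_n : \C^{p_n} \to \C^k \otimes \C^n$ is an isometry whose image is the random subspace of the model, and set $Q_n = V_n V_n^* \in \M_{nk}(\C)$, a Haar-distributed orthogonal projection of rank $p_n$. The standard identity $(A \otimes \ol A)\ket{\Omega} = \mathrm{vec}(AA^*)/\sqrt{p_n}$ applied to $A=V_n$ and the input Bell vector $\ket{\Bell_{tnk}}$ gives
\[
Z_n \;=\; \frac{1}{p_n}\,\trace_{nn}\bigl[\mathrm{vec}(Q_n)\,\mathrm{vec}(Q_n)^*\bigr] \;\in\; \M_{k^2}(\C),
\]
after rearranging the tensor factors of $(\C^k\otimes \C^n)^{\otimes 2}$ as $\C^k\otimes \C^k\otimes \C^n\otimes \C^n$ and tracing out the two $\C^n$ copies. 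Unpacking indices,
\[
\langle ab|Z_n|a'b'\rangle \;=\; \frac{1}{p_n}\sum_{i,j=1}^{n} (Q_n)_{(a,i),(b,j)}\,(Q_n)_{(b',j),(a',i)}.
\]

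Second, I would compute the mean. By unitary invariance, $\E[Q_n^{\otimes 2}]$ commutes with $U^{\otimes 2}$ for every $U\in U(nk)$, hence lies in the linear span of $I$ and the flip $F$; the two coefficients are pinned down by $\trace Q_n = \trace Q_n^2 = p_n$, yielding
\[
\E[(Q_n)_{\mu\nu}(Q_n)_{\rho\sigma}] \;=\; \alpha_n\,\delta_{\mu\nu}\delta_{\rho\sigma} \;+\; \beta_n\,\delta_{\mu\sigma}\delta_{\nu\rho},
\]
with $\alpha_n\to t^2$ and $nk\,\beta_n \to t(1-t)$ as $n\to\iy$. Substituting into the entry formula above and sending $n\to\iy$ produces
\[
\E[Z_n] \;\longrightarrow\; t\,E_k \;+\; \frac{1-t}{k^2}\,I_{k^2},
\]
a deterministic matrix whose spectrum is precisely $\gamma^{(t)}$, since $E_k$ has spectrum $(1,0,\ldots,0)$.

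Third, I would upgrade convergence in mean to almost sure convergence. Because the target dimension $k^2$ is fixed, operator-norm convergence is equivalent to entrywise convergence, and by Weyl's perturbation inequality forces convergence of the spectra. To concentrate each entry of $Z_n$ around its mean I would invoke L\'evy's concentration inequality on $U(nk)$: each entry of $Z_n$ is a degree-four polynomial in $U_n,\ol U_n$, bounded in modulus by $1$, and a direct check gives a Lipschitz constant of order $1/n$ with respect to the Hilbert-Schmidt norm. L\'evy's inequality then yields Gaussian-type concentration with variance $O(1/n)$, and Borel-Cantelli delivers the almost sure limit.

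The only mildly delicate point is the clean bookkeeping of the leading constants $\alpha_n \sim t^2$ and $nk\,\beta_n \sim t(1-t)$ together with the Lipschitz estimate --- both are completely routine in the graphical Weingarten framework set up in the companion paper \cite{collins-nechita}, where this theorem is originally proved. Alternatively one may replace the L\'evy step by a direct Weingarten-based fourth-moment computation showing that each entry's variance is $O(1/n)$, which is summable.
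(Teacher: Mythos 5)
The theorem you are proving is not actually proved in this paper: the authors cite it directly from the companion paper \cite{collins-nechita}, where it is established by a graphical Weingarten computation of \emph{all} moments $\E[\trace Z_n^p]$ together with a matching variance estimate. Your argument is genuinely different: you reduce $Z_n$ to a partial trace of the pure state $\mathrm{vec}(Q_n)/\sqrt{p_n}$, compute $\E[Z_n]$ by the Schur--Weyl (commutant) decomposition of $\E[Q_n^{\otimes 2}]$ rather than by Weingarten sums, and then concentrate. The reduction and the first-moment computation are correct --- your constants $\alpha_n\to t^2$, $nk\,\beta_n\to t(1-t)$ are right, and substituting into the entry formula indeed gives $\E[Z_n]\to \frac{t}{k}\ketbra{\omega}{\omega}+\frac{1-t}{k^2}I$ with $\ket\omega=\sum_a\ket{aa}$, whose spectrum is exactly $\gamma^{(t)}$. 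Conceptually this buys a cleaner route than the Weingarten expansion: the deterministic limit emerges from a two-line linear-algebra identity rather than from summing over pairings.

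Two quantitative slips in the concentration step, one harmless and one not. First, the Lipschitz constant of an entry $f(U)=\langle ab|Z_n|a'b'\rangle$ in Hilbert--Schmidt metric is $O(1/\sqrt{p_n})=O(1/\sqrt n)$, not $O(1/n)$: writing $\psi=\mathrm{vec}(UqU^*)/\sqrt{p_n}$ one gets $\|d\psi\|\leq 2\|dU\|_{HS}/\sqrt{p_n}$ and hence $|df|\leq 4\|dU\|_{HS}/\sqrt{p_n}$. This still gives L\'evy tails $\exp(-c\,nk\,\e^2\,p_n)=\exp(-c'\,n^2\,\e^2)$, which is summable, so the argument survives. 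Second, your fallback claim that ``each entry's variance is $O(1/n)$, which is summable'' is wrong as stated: $\sum 1/n$ diverges, so Chebyshev with an $O(1/n)$ variance does \emph{not} feed into Borel--Cantelli. The companion paper in fact needs and proves a variance of order $O(1/n^2)$ for the relevant quantities; if you want to avoid L\'evy you should claim (and check by Weingarten) that order, or use a fourth central moment of order $O(1/n^2)$. With that correction your proof is complete and arguably more transparent than the original.
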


From this we deduce the following corollary, which gives an upper bound for the 
minimum output entropy for the product channel $\Phi\otimes\overline\Phi$:

\begin{corollary}
\label{cor:upperbound-conjugate}
Almost surely, as $n \to \iy$, 
\[\limsup_n H^p_{\min}(\Phi_n\otimes\overline\Phi_n)\leq \frac{1}{1-p}\log \left[\left(t+\frac{1-t}{k^2}\right)^p
+(k^2-1)\left(\frac{1-t}{k^2}\right)^p\right]\]
In the case $p=1$ the upper bound simplifies to 
\[\limsup_n H_{\min}(\Phi_n\otimes\overline\Phi_n)\leq 
-\left(t+\frac{1-t}{k^2}\right)\log \left(t+\frac{1-t}{k^2}\right)-(k^2-1)\frac{1-t}{k^2}\log \left(\frac{1-t}{k^2}\right).\]
\end{corollary}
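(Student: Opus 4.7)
The proof is essentially a direct application of Theorem \ref{thm:eig_product} combined with the variational characterization of $H^p_{\min}$. The plan is to bound the minimum output entropy by the entropy at a single specific input, then compute the entropy of the limiting spectrum explicitly.

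First, I would recall that by the definition of the minimum output entropy,
\[
H^p_{\min}(\Phi_n \otimes \overline{\Phi}_n) \leq H^p\bigl((\Phi_n \otimes \overline{\Phi}_n)(E_{tnk})\bigr),
\]
since $E_{tnk}$ is a particular input density matrix (the projection onto the maximally entangled Bell vector on $(\C^{tnk})^{\otimes 2}$). This reduces the problem to estimating the R\'enyi entropy of the output obtained from the Bell input.

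Next, I would invoke Theorem \ref{thm:eig_product}, which asserts that almost surely the spectrum of $(\Phi_n \otimes \overline{\Phi}_n)(E_{tnk}) \in \M_{k^2}(\C)$ converges as $n \to \infty$ to the probability vector
\[
\gamma^{(t)} = \left( t + \frac{1-t}{k^2}, \underbrace{\frac{1-t}{k^2}, \ldots, \frac{1-t}{k^2}}_{k^2-1 \text{ times}} \right).
\]
Since $k^2$ is fixed and $H^p$ is a continuous function of the eigenvalues on the probability simplex $\Delta_{k^2}$ (this is where we use that $k$ is fixed, independent of $n$), we conclude that almost surely
\[
\lim_n H^p\bigl((\Phi_n \otimes \overline{\Phi}_n)(E_{tnk})\bigr) = H^p(\gamma^{(t)}).
\]

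Finally, I would compute $H^p(\gamma^{(t)})$ directly from the definition:
\[
H^p(\gamma^{(t)}) = \frac{1}{1-p}\log\left[\left(t+\frac{1-t}{k^2}\right)^p + (k^2-1)\left(\frac{1-t}{k^2}\right)^p\right],
\]
which yields the first inequality after taking $\limsup$. The $p=1$ case follows either by taking $p\to 1$ in this expression (an elementary l'H\^opital computation) or by applying the Shannon entropy formula $H(x) = -\sum_i x_i \log x_i$ directly to $\gamma^{(t)}$.

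The only conceptual subtlety is the passage from almost sure convergence of the eigenvalues to almost sure convergence of the entropy; but as $k$ is fixed and the entropy is continuous on the compact simplex $\Delta_{k^2}$, this step is automatic and no hard analysis is required. Thus the corollary is really a straightforward packaging of Theorem \ref{thm:eig_product}.
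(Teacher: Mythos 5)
Your proof is correct and follows exactly the route the paper intends: bound $H^p_{\min}$ by evaluating at the Bell-state input, invoke Theorem \ref{thm:eig_product} for the almost-sure limiting spectrum $\gamma^{(t)}$, and use continuity of $H^p$ on the fixed-dimensional simplex $\Delta_{k^2}$ to pass to the limit. The paper gives no explicit proof (it simply states the corollary as a deduction from Theorem \ref{thm:eig_product} and the discussion preceding it), so your write-up correctly supplies the routine details, including the one genuine point worth noting --- that $k$ being fixed is what makes the eigenvalue-to-entropy passage automatic.
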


\subsection{Macroscopic counterexamples for the R\'enyi entropy}

In this section, we start by fixing $t=1/2$.
We assume that $k$ is even, in order to avoid non-integer dimensions. A value of $1/2$ for $t$ means that the environment to  which the input of the channel is coupled is 2-dimensional, i.e. a single qubit. 
The main result of this section is that we obtain a violation of the R\'enyi entropy 
in this simplest purely quantum case, $k'=2$.

Using Theorem \ref{thm:eig_product}, the asymptotic eigenvalue vector for the output of the product channel is 
\[\gamma = \gamma^{(1/2)} = \left(\frac{1}{2}+\frac{1}{2k^2}, \frac{1}{2k^2}, \ldots, \frac{1}{2k^2}\right).\]
The series expansion for $H^p(\gamma)$ when $k \to \iy$ and the Corollary \ref{cor:upperbound-conjugate} imply that, almost surely,
\begin{equation}
\label{eq:case-t-half}
\limsup_n H^p_{\min}(\Phi \otimes \overline \Phi) \leq \frac{p}{p-1} \log 2 + o(1).
\end{equation}

In the case of a single channel, since $\phi(x, 1/2) = 1/2 + \sqrt{x(1-x)}$, the vector $\beta = \beta^{(1/2)}$ is has a particularly simple form in this case:
\begin{align*}
\beta_1 &= \frac 1 2 + \frac{\sqrt{k-1}}{k},\\
\beta_j &= \psi\left(\frac j k\right) - \psi\left(\frac {j-1} k\right), \quad \forall \; 2 \leq j \leq k/2,\\ 
\beta_j &= 0, \quad \forall \;  k/2 < j \leq k, 
\end{align*}
where $\psi(x)= \sqrt{x(1-x)}$. Note that the first eigenvalue is large (of order 1/2) and that the others are small:
\[\beta_j \leq \frac{1}{k} \psi'\left( \frac 1 k \right) \leq \frac{1}{\sqrt k}, \quad \forall \; 2 \leq j \leq k/2.\]

\begin{theorem}\label{thm:violation_renyi}
Almost surely as $n\to \infty$, 
\[\liminf_n H^p_{\min}(\Phi )=\liminf_n H^p_{\min}(\overline\Phi )\geq  \frac{p}{p-1} \log 2 +o(1).\]
Since
$$\limsup_n H^p_{\min}(\Phi \otimes \overline \Phi) \leq \frac{p}{p-1} \log 2 + o(1),$$
the additivity of the R\'enyi $p$-norms is violated for all $p>1$. 
\end{theorem}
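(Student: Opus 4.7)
The plan is to combine the single-channel lower bound of Theorem \ref{thm:application-entropies} with the bi-channel upper bound already recorded in Corollary \ref{cor:upperbound-conjugate}, both specialized to $t = 1/2$. Theorem \ref{thm:application-entropies} gives directly, almost surely as $n \to \iy$,
\[\liminf_n H^p_{\min}(\Phi) \geq H^p(\beta^{(1/2)}),\]
and the analogous bound for $\overline\Phi$ follows because $\overline U$ is Haar distributed (the Haar measure on $U(N)$ is invariant under complex conjugation), so the hypotheses of Theorem \ref{thm:application-entropies} hold verbatim for the conjugate channel. The whole statement thus reduces to proving $H^p(\beta^{(1/2)}) = \tfrac{p}{p-1}\log 2 + o(1)$ as $k \to \iy$.

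For this, I would estimate $\sum_{j=1}^k (\beta_j^{(1/2)})^p$ by separating the leading term from the tail. The first coordinate $\beta_1 = 1/2 + \sqrt{k-1}/k = 1/2 + O(k^{-1/2})$ contributes $\beta_1^p = 2^{-p}(1 + O(k^{-1/2}))$. For $j \geq 2$, concavity of $\psi(x) = \sqrt{x(1-x)}$ on $[0,1/2]$ combined with the elementary inequality $\psi'(x) \leq 1/(2\sqrt{x})$ (valid for $x \in (0,1/2]$) yields $\beta_j \leq 1/(2\sqrt{k(j-1)})$, so
\[\sum_{j=2}^{k/2} \beta_j^p \;\leq\; \frac{1}{2^p\,k^{p/2}}\sum_{m=1}^{k/2-1} m^{-p/2}.\]
A case distinction on $p \in (1,2)$, $p = 2$, $p > 2$ shows that this upper bound is $o(1)$ for every fixed $p > 1$ (of order $k^{1-p}$, $k^{-1}\log k$, and $k^{-p/2}$ respectively, using partial sums of the Riemann zeta series). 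Combining, $\sum_j (\beta_j^{(1/2)})^p = 2^{-p} + o(1)$, which by the definition of R\'enyi entropy gives $H^p(\beta^{(1/2)}) = \tfrac{p}{p-1}\log 2 + o(1)$.

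The matching upper bound on the bi-channel at $t = 1/2$ is already supplied by Corollary \ref{cor:upperbound-conjugate}: the ``Bell'' eigenvalue $1/2 + 1/(2k^2)$ tends to $1/2$ and the $k^2 - 1$ remaining eigenvalues contribute $O(k^{-2(p-1)})$, so $\limsup_n H^p_{\min}(\Phi\otimes\overline\Phi) \leq \tfrac{p}{p-1}\log 2 + o(1)$. Subtracting, one obtains for all sufficiently large $k$
\[H^p_{\min}(\Phi) + H^p_{\min}(\overline\Phi) - H^p_{\min}(\Phi\otimes\overline\Phi) \;\geq\; \tfrac{p}{p-1}\log 2 + o(1) \;>\; 0,\]
which is the desired violation. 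The only delicate technical point is the tail estimate for $\sum_{j\geq 2}\beta_j^p$: since $\psi'$ fails to lie in $L^p(0,1/2)$ once $p \geq 2$, a single Riemann-sum-to-integral comparison will not work uniformly in $p$ and one must either break into cases as above or split the sum at an intermediate index and optimize. Everything else is a routine combination of the two already established ingredients.
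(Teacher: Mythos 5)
Your proposal is correct and follows essentially the same route as the paper: lower-bound $H^p_{\min}(\Phi)$ by $H^p(\beta^{(1/2)})$ via Theorem \ref{thm:application-entropies}, extract the dominant term $\beta_1^p = 2^{-p}+o(1)$, and show the tail $\sum_{j\geq 2}\beta_j^p$ is $o(1)$ by a case split on $p$. Your uniform estimate $\beta_j \leq (2\sqrt{k(j-1)})^{-1}$ handles the three cases a bit more tidily than the paper does (which uses the cruder $\beta_j \leq k^{-1/2}$ for $p>2$ and a Riemann-sum comparison with $\int_0^{1/2}\psi'(t)^p\,dt$ for $1<p<2$), and you spell out the Haar-invariance-under-conjugation argument for $\overline\Phi$ that the paper leaves implicit, but these are minor presentational differences rather than a new idea.
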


\begin{proof}
We shall provide a lower bound for $H^p(\beta)$. Notice that the main contribution is given by the largest eigenvalue: $\beta_1^p = 2^{-p} + o(1)$. Next, we show that the contribution of the smaller eigenvalues is asymptotically zero. We consider three cases: $p>2$, $p=2$ and $1 < p < 2$. If $p>2$, then
\[\sum_{j\geq 2} \beta_j^p \leq \sum_{j\geq 2} k^{-p/2} \leq k^{1-p/2} = o(1).\]
For $p=2$, one has:
\begin{align*}
\sum_{j\geq 2} \beta_j^2 &= \sum_{j= 2}^{k/2} \left[\psi(\frac{j}{k}) - \psi(\frac{j-1}{k})\right]^2 \leq \sum_{j=2}^{k/2} \left[\frac{1}{k} \cdot \sup_{(j-1)/k \leq x \leq j/k} \psi'(x)\right]^2 \\
&= \sum_{j=2}^{k/2} \left[\frac{1}{k}  \psi'\left(\frac{j-1}{k}\right)\right]^2 = \frac{1}{k} \sum_{j=1}^{k/2-1}  \frac{(1-2j/k)^2}{4j(1-j/k)} = o(1).
\end{align*} 
The case $1<p<2$ is more involved:
\begin{align*}
\sum_{j\geq 2} \beta_j^p & \leq \sum_{j=2}^{k/2} \left[\frac{1}{k}  \psi'\left(\frac{j-1}{k}\right)\right]^p 
&\leq k^{1-p} \left[ \int_0^{1/2} \psi'(t)^p dt \right] = o(1).
\end{align*}
Hence, in all three cases, $H^p(\beta) \geq \frac{p}{p-1} \log 2 + o(1)$. This inequality and Eq.~ (\ref{eq:case-t-half}) provide the announced violation of the additivity conjecture for R\'enyi entropies.
\[H^p_{\min}(\Phi_n \otimes \overline \Phi_n) \leq \frac{p}{p-1} \log 2  < 2 \cdot \left[\frac{p}{p-1} \log 2 + o(1) \right]\leq 2 H^p_{\min}(\Phi_n).\]

\end{proof}

Let us now come back to the more general case of arbitrary $t \in (0,1)$ fixed. 
It is natural to ask whether the bound $H^p(\beta^{(t)})$ is optimal.
Even though this is an open question for fixed $k$, the corollary below implies that it is asymptotically optimal for large $k$.
More precisely, let
$\Phi_{k,n}$ be the random quantum channel $\Phi_n$ introduced in Section \ref{subsec:random-quantum-channel-model} (since $k$ will vary in the statement below, we need to keep track of it). We can then state the following

\begin{corollary}
For all $p>1$, there exists a sequence $n_k$ tending to infinity as $k$ tends to infinity, such that, almost surely
\[ \lim_k H^p_{\min}(\Phi_{k,n_k} \otimes \overline \Phi_{k,n_k}) = \lim_k H^p_{\min}(\Phi_{k,n_k} )= \frac{p}{1-p} \log t.\]
In particular this means that we can almost surely estimate the Schatten $S_1\to S_p$ norm of that quantum channel:
\[ \lim_k ||\Phi_{k,n_k} \otimes \overline \Phi_{k,n_k}||_{S_1\to S_p} = \lim_k ||\Phi_{k,n_k}||_{S_1\to S_p}=t\]
\end{corollary}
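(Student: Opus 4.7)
The plan is to sandwich both quantities for each fixed $k$, then send $k \to \infty$ along a diagonal subsequence $n_k$ constructed by a Borel--Cantelli argument. Everything reduces to entropies via the identity $H^p_{\min}(\Phi) = \frac{p}{1-p} \log \norm{\Phi}_{S_1 \to S_p}$ valid for $p > 1$. A preliminary numerical check, analogous to the Riemann-sum estimate in the proof of Theorem \ref{thm:violation_renyi}, shows that the three scalars $H^p(\beta^{(t,k)})$, $H^p(\gamma^{(t,k)})$ and the Corollary \ref{cor:upperbound-conjugate} upper bound $U_k$ all converge to $\frac{p}{1-p}\log t$ as $k \to \infty$: the leading coordinates $\beta^{(t,k)}_1 = \phi(1/k,t) \to t$ and $\gamma^{(t,k)}_1 = t + (1-t)/k^2 \to t$ supply the $t^p$ contribution, while the remaining coordinates contribute $p$-power sums of orders $O(k^{1-p})$ and $O(k^{2-2p})$ respectively, both $o(1)$ for $p > 1$.

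For the single channel, Theorem \ref{thm:application-entropies} furnishes the almost-sure lower bound $\liminf_n H^p_{\min}(\Phi_{k,n}) \geq H^p(\beta^{(t,k)})$. For the matching upper bound I invoke the converse clause of Theorem \ref{thm:main-thm} with $j=1$: almost surely as $n \to \infty$ one can extract a unit vector $x_1 = x_1(k,n) \in V_n$ whose largest output Schmidt coefficient $\lambda_1(x_1)$ tends to $\phi(1/k,t)$. Using $P_{x_1}$ as input yields
\[
\lambda_1(x_1)^p \leq \norm{\Phi_{k,n}(P_{x_1})}_p^p \leq \norm{\beta^{(t,k)}}_p^p,
\]
the upper bound coming from Schur-convexity of $\norm{\cdot}_p^p$ combined with the majorization $\spec(\Phi_{k,n}(P_{x_1})) \prec \beta^{(t,k)}$ supplied by Theorem \ref{thm:main-thm}. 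Both outer quantities converge to $t^p$ as $k \to \infty$, so along a suitable $n_k$ one gets $\norm{\Phi_{k,n_k}(P_{x_1})}_p \to t$, i.e., $H^p_{\min}(\Phi_{k,n_k}) \to \frac{p}{1-p}\log t$. For the bi-channel, the upper half is immediate: Corollary \ref{cor:upperbound-conjugate} gives $\limsup_n H^p_{\min}(\Phi_{k,n} \otimes \overline\Phi_{k,n}) \leq U_k \to \frac{p}{1-p}\log t$.

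The matching bi-channel lower bound is the main obstacle. The product $U_n \otimes \overline{U_n}$ is not Haar-distributed on $\mathcal{U}(n^2 k^2)$, so Theorem \ref{thm:main-thm} cannot be invoked directly on the bi-channel---a naive application with parameters $(t^2, k^2)$ would predict a top output eigenvalue of order $\phi(1/k^2,t^2) \sim t^2$, flatly contradicting the value $\sim t$ observed for the Bell input. The argument must therefore exploit the specific tensor structure: only one preferred direction in $\C^{k^2}$, namely the maximally entangled state, aligns with the random subspace at the enhanced level $t$ (as quantified by the asymptotic spectrum $\gamma^{(t,k)}$ of $(\Phi \otimes \overline\Phi)(E_{tnk})$ computed in \cite{collins-nechita}), whereas all other $\C^{k^2}$-rank-one directions concentrate only at the generic level $t^2$. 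Combined with Schur-convexity, this uniform control forces $\norm{(\Phi_{k,n}\otimes\overline\Phi_{k,n})(\rho)}_p \leq t + o_k(1)$ for every input $\rho$, giving $\liminf_k H^p_{\min}(\Phi_{k,n_k}\otimes\overline\Phi_{k,n_k}) \geq \frac{p}{1-p}\log t$. The diagonal extraction itself is then standard: for each $k$ pick $n_k$ large enough that each of the finitely many almost-sure inequalities above fails on an event of probability at most $2^{-k}$, and apply Borel--Cantelli to obtain a single full-measure set on which the two limits hold simultaneously along $(k, n_k)$.
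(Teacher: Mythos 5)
Your single-channel analysis is correct and is in fact sharper than what the paper does at this point: invoking the optimality (converse) clause of Theorem~\ref{thm:main-thm} with $j=1$ to produce a test vector $x_1\in V_n$ whose largest Schmidt coefficient converges to $\phi(1/k,t)$, and then bounding $H^p_{\min}(\Phi_{k,n})\leq H^p(\Phi_{k,n}(P_{x_1}))\leq \frac{p}{1-p}\log\lambda_1(x_1)\to\frac{p}{1-p}\log\phi(1/k,t)\to\frac{p}{1-p}\log t$, is a valid direct upper bound, and paired with Theorem~\ref{thm:application-entropies} it pins down $H^p_{\min}(\Phi_{k,n_k})$. Your bi-channel upper bound via Corollary~\ref{cor:upperbound-conjugate} is also fine, as is the Borel--Cantelli diagonal extraction.

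The bi-channel lower bound is where the proposal breaks down. You correctly flag that $U_n\otimes\overline{U_n}$ is not Haar on $\mathcal U(n^2k^2)$ and that Theorem~\ref{thm:main-thm} therefore does not apply to $\Phi\otimes\overline\Phi$, but the paragraph that follows is an assertion rather than an argument. Theorem~\ref{thm:eig_product} computes the output spectrum of $\Phi\otimes\overline\Phi$ for the single input $E_{tnk}$ only; it gives no control over what $\Phi\otimes\overline\Phi$ does to an arbitrary input $\rho$, and nothing in the paper proves that ``all other $\C^{k^2}$ directions concentrate only at level $t^2$'' uniformly, nor that Schur-convexity then forces $\|(\Phi\otimes\overline\Phi)(\rho)\|_p\leq t+o_k(1)$ for every $\rho$. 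As written this is exactly the missing claim restated, not a proof of it.

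What you overlook is the short-cut the paper actually uses to close the sandwich: the elementary monotonicity $H^p_{\min}(\Phi_{k,n})\leq H^p_{\min}(\Phi_{k,n}\otimes\overline\Phi_{k,n})$ (minimum output entropy can only increase when one tensors with a further channel). Combined with the single-channel lower bound $\liminf_n H^p_{\min}(\Phi_{k,n})\geq H^p(\beta^{(t)})$, this \emph{immediately} yields the bi-channel lower bound, with no need to analyse the spectrum of $\Phi\otimes\overline\Phi$ away from the Bell input; together with Corollary~\ref{cor:upperbound-conjugate} it sandwiches all four quantities (liminf/limsup, single/bi) between $H^p(\beta^{(t)})$ and $H^p(\gamma^{(t)})$, both of which converge to $\frac{p}{1-p}\log t$. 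With that one inequality inserted, your diagonal argument finishes the proof; without it, the proposal is incomplete.
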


\begin{proof}
For $t=1/2$, this follows directly by a diagonal argument from Theorem
\ref{thm:violation_renyi} and Equation
\eqref{eq:case-t-half}
together with the simple fact that the entropy increases when one takes tensor products:
$$H^p_{\min}(\Phi_{k,n_k} )\leq H^p_{\min}(\Phi_{k,n_k} \otimes \overline \Phi_{k,n_k}).$$
The asymptotic estimates of Theorem \ref{thm:violation_renyi} are readily adapted to arbitrary $t \in (0,1)$.
As for the norm estimate, it follows from the definition of the Schatten norm and the R\'enyi entropy, as well as
the fact that the $S_1\to S_p$ norm is attained on density matrices. 
\end{proof}

It is remarkable that the norm estimate for 
$||\Phi \otimes \overline \Phi ||_{S_1\to S_p}$
given by $||\Phi \otimes \overline \Phi(E_{tnk})||_{S_p}$
is actually optimal.
The above corollary stands as a mathematical evidence that the Bell states asymptotically maximize the
$S_1 \to S_p$ norm of $\Phi \otimes \overline \Phi$.

The first example of `R\'enyi superadditive' quantum channel was obtained by Holevo and Werner in \cite{holevo-werner} using a deterministic channel. However, their example violated the additivity conjecture only for $p>4.79$. 
Hayden and Winter found a class of random counter examples for the whole range of parameters $p>1$ in \cite{hayden}. 
Our being able to prescribe $t$ in the counterexample of Theorem \ref{thm:violation_renyi}
is an improvement to the counterexamples provided in 
the paper \cite{hayden-winter} (even though there is evidence  that the very recent techniques
of \cite{fukuda-king,brandao-horodecki} could be applied for $p>1$ and \emph{finite} $t$ -- yet perhaps
not as big as $1/2$ or $1/3$). 

Physically, this means that to obtain a counterexample, it is enough to couple randomly the input to a qubit ($k'=1/t=2$) to obtain a counterexample.
The above reasoning applies actually for any $t$. In the following corollary we focus on the case $t=1/k'$ for integer $k'$, as it is more relevant physically.

\begin{corollary}
\label{cor-explicite}
For each $p>1$ and each integer $k'\geq 2$, let $t=1/k'$. There exists an integer $k_0$ such that for all $k\geq k_0$,
one has almost surely 
$$\limsup_n H^p_{\min}(\Phi \otimes \overline \Phi) < 2\liminf_n H^p_{\min}(\Phi )$$
\end{corollary}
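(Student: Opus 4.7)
The plan is to combine the single-copy lower bound coming from Theorem \ref{thm:application-entropies} with the product upper bound coming from Corollary \ref{cor:upperbound-conjugate}, and show that for $k$ large both sides are controlled by the same leading term $\frac{p}{1-p}\log t$, so that the factor of $2$ on the right creates a strictly positive gap.

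First, Corollary \ref{cor:upperbound-conjugate} gives, almost surely,
\[
\limsup_n H^p_{\min}(\Phi_n \otimes \overline\Phi_n) \leq A_k(t) := \frac{1}{1-p}\log\left[\left(t+\frac{1-t}{k^2}\right)^p+(k^2-1)\left(\frac{1-t}{k^2}\right)^p\right].
\]
A direct expansion as $k \to \iy$ shows that the first term inside the logarithm tends to $t^p$ while the second is $O(k^{2-2p}) = o(1)$ since $p>1$. Hence $A_k(t) = \frac{p}{1-p}\log t + o(1)$ as $k \to \iy$.

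Second, Theorem \ref{thm:application-entropies} gives, almost surely, $\liminf_n H^p_{\min}(\Phi_n) \geq H^p(\beta^{(t)})$. I need to show
\[
H^p(\beta^{(t)}) = \frac{p}{1-p}\log t + o(1), \qquad k \to \iy.
\]
The dominant term is $(\beta^{(t)}_1)^p = \phi(1/k,t)^p = (t + O(1/\sqrt{k}))^p = t^p + o(1)$, from the explicit formula $\phi(1/k,t)=t+(1-2t)/k + 2\sqrt{t(1-t)(1-1/k)/k}$. The remaining mass $\sum_{j\geq 2}(\beta^{(t)}_j)^p$ has to be shown to be $o(1)$; this is the main technical point and I would mimic exactly the three-case argument (for $p>2$, $p=2$, $1<p<2$) used in the proof of Theorem \ref{thm:violation_renyi}. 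Writing $\beta^{(t)}_j = \int_{(j-1)/k}^{j/k}\partial_x\phi(x,t)\,dx$ and using $\partial_x\phi(x,t) \leq C_t / \sqrt{x(1-x)}$ (uniformly in $t$ bounded away from $0,1$), one obtains $\beta^{(t)}_j \leq C_t/\sqrt{jk}$, hence $\sum_{j\geq 2}(\beta^{(t)}_j)^p \leq C_t^p k^{-p/2}\sum_{j\geq 2} j^{-p/2}$ handled by the same case-split as in the proof of Theorem \ref{thm:violation_renyi}; this is the only place where a small computation is really required.

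Putting the two bounds together, almost surely
\[
2\liminf_n H^p_{\min}(\Phi_n) - \limsup_n H^p_{\min}(\Phi_n\otimes\overline\Phi_n) \geq 2H^p(\beta^{(t)}) - A_k(t) = \frac{p}{1-p}\log t + o(1).
\]
Since $p>1$ and $0<t<1$, the constant $\frac{p}{1-p}\log t$ is strictly positive, so the left-hand side is eventually strictly positive; choosing $k_0$ large enough that the $o(1)$ error is dominated yields the strict inequality claimed in the corollary. The only real obstacle is controlling the tail $\sum_{j\geq 2}(\beta^{(t)}_j)^p$; once that asymptotic is in hand the rest is a direct comparison of two leading-order expressions.
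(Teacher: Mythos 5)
Your proposal is correct and takes essentially the same route the paper intends: combine Corollary \ref{cor:upperbound-conjugate} for the product upper bound with Theorem \ref{thm:application-entropies} for the single-channel lower bound, then show both sides equal $\frac{p}{1-p}\log t + o(1)$ as $k\to\iy$ by isolating the dominant term $(\beta^{(t)}_1)^p \to t^p$ and killing the tail $\sum_{j\geq 2}(\beta^{(t)}_j)^p$ via the same three-case estimate as in the proof of Theorem \ref{thm:violation_renyi}. The paper omits these details (``the proof is very similar to the case $k'=2$''); your uniform bound $\beta^{(t)}_j \leq C_t/\sqrt{jk}$ is a clean and valid way to package the mean-value / $\psi'$ estimate used there, so this fills the gap exactly as intended.
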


Since the proof is very similar to the case $k'=2$, instead of providing the details, we plot in Figure \ref{fig:plots} \emph{acceptable} values for $k_0$ as functions of $p$, for several values of $k'=1/t$:
$$k_0(t, p) = \min \{k \in \N \; | \; \limsup_n H^p(\Phi \otimes \overline \Phi)(E_{tnk}) < 2 H^p(\beta^{(t)})\}.$$
Note that $k_0$ as defined above 
may note be  the \emph{smallest} dimension yielding a violation of $p$-R\'enyi additivity. It may be that a better choice for the input state of the product channel could yield a smaller value for $H^p_{\min}(\Phi \otimes \overline \Phi)$. As the plots suggest, the values of $k_0$ are not bounded when $p\to 1$. This fact is independent on the choice of the parameter $t=1/k'$. The results of \cite{brandao-horodecki, fukuda-king} suggest that there should be a $k'$ large enough for which it is possible to keep $k_0$ bounded as $p\to 1$. This improvement is due to their better bounds on $H^p_{\min}(\Phi )$, obtained using the techniques developed by Hastings in \cite{hastings}.

\begin{figure}[ht]
\centering
\subfigure[]{\includegraphics[width=0.45\textwidth]{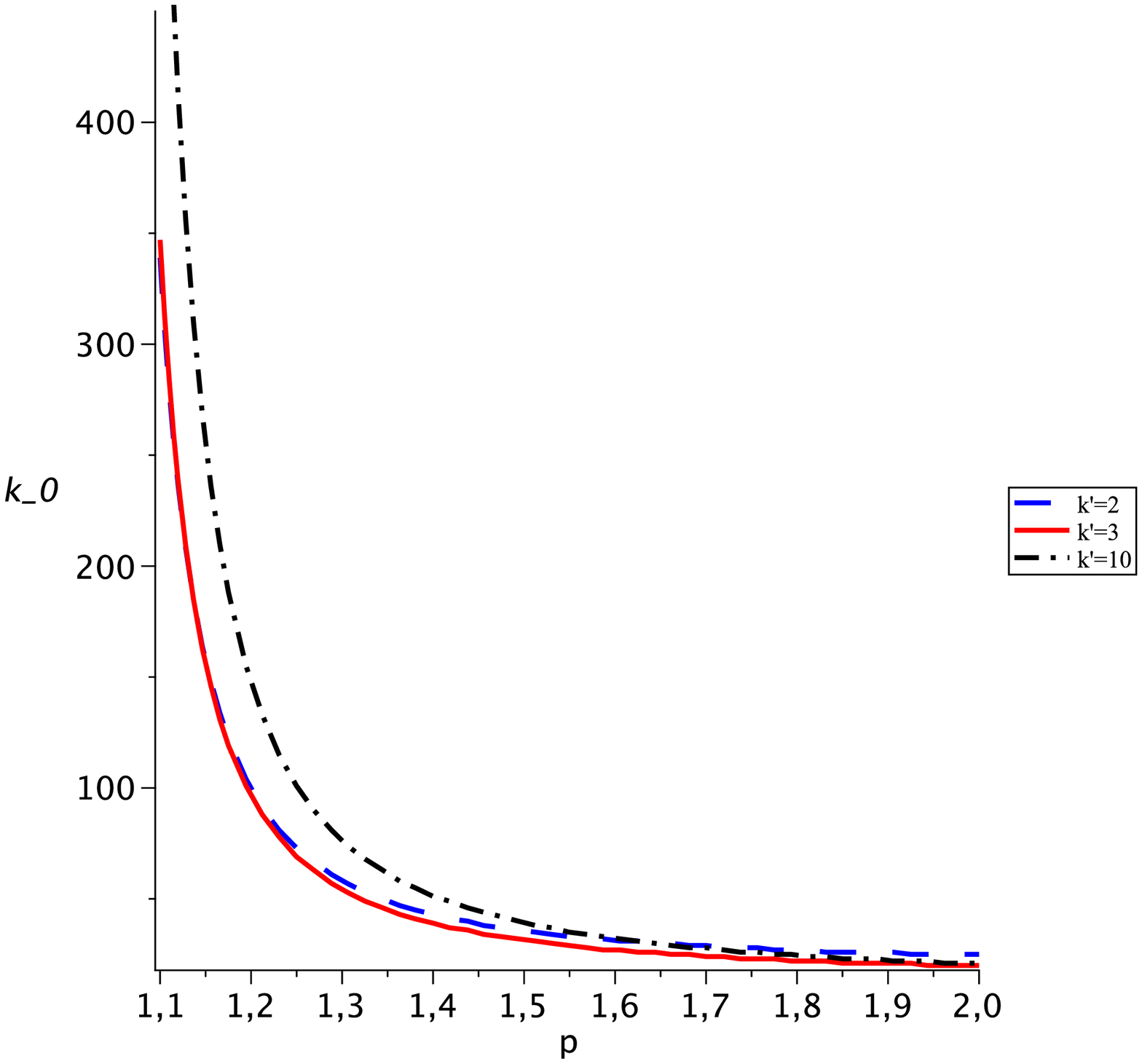}}
\subfigure[]{\includegraphics[width=0.45\textwidth]{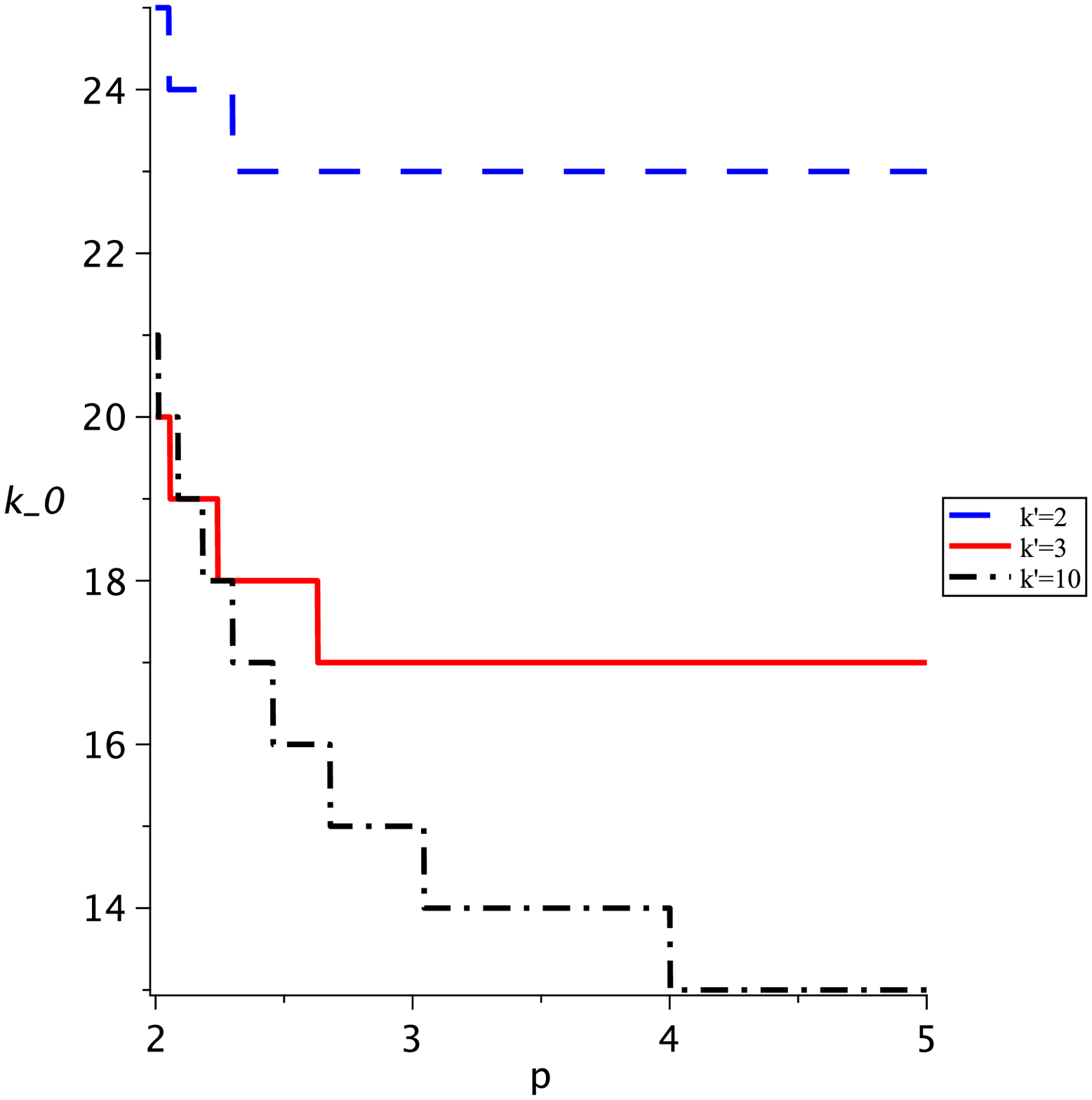}}
\caption{Plots of $k_0$, i.e. acceptable values of $k$ for which we get an asymptotic additivity violation, in function of $p$, for different values of $k' = 1/t$. Two ranges for $p$ are plotted separately: $p \in [1.1, 2]$ in (a) and  $p \in [2,5]$ in (b).}
\label{fig:plots}
\end{figure}

We finish this section by a computation showing that the above bounds are not good enough to obtain the violation of the additivity conjecture in the case $p=1$.
We start with the entropy of the product channel:
\[H(\gamma) = \log k  + \log 2 + o(1).\]
For the case of the single channel, we need an upper bound for $H(\beta^{(1/2)})$ (recall that $h(x) = -x \log x$):
\begin{align*}
H(\beta) &= h(\beta_1) + \sum_{j=2}^{k/2} h(\beta_j)
= \frac{\log 2 }{2} + \sum_{j=2}^{k/2} h\left[\psi\left(\frac{j}{k}\right) - \psi\left(\frac{j-1}{k}\right)\right] + o(1)\\
&\leq \frac{\log 2}{2} + \sum_{j=2}^{k/2} h\left[\frac{1}{k} \cdot \sup_{(j-1)/k \leq x \leq j/k} \psi'(x)\right] + o(1)
= \frac{\log 2}{2} + \sum_{j=2}^{k/2} h\left[\frac{1}{k} \psi'\left(\frac{j-1}{k}\right)\right] + o(1).
\end{align*}
Using 
\[h\left[\frac{1}{k} \psi'\left(\frac{j}{k}\right)\right] = \frac{\log k}{k} \psi'\left(\frac{j}{k}\right) + \frac{1}{k}[h \circ \psi']\left(\frac{j}{k}\right)\]
and 
\[\int_0^{1/2} [h \circ \psi'](t) \; dt = - \frac{\log 2}{2},\]
we obtain
\begin{align*}
H(\beta^{(1/2)})&\leq \frac{\log 2}{2} + \sum_{j=1}^{k/2-1} \frac{\log k}{k} \psi'\left(\frac{j}{k}\right) + \sum_{j=1}^{k/2-1} \frac{1}{k} [h \circ \psi']\left(\frac{j}{k}\right) + o(1)\\
&= \frac{\log 2}{2} + \frac{1}{2} \log k - \frac{\log 2}{2} + o(1) 
= \frac{\log k}{2} + o(1).
\end{align*}
At the end, the entropy deficit is 
\[H(\gamma) - 2H(\beta) \geq \log 2  + o(1) > 0,\]
which does not yield a violation of the minimum output von Neumann entropy.

\subsection{The case $t=k^{-\alpha}$}
We conclude this paper with the study of the case $t=k^{-\alpha}$, where $\alpha > 0$ is a fixed parameter. 
This corresponds to an exploration of a larger environment size 
$\C^{k^\alpha}$. 
To simplify the computations, we consider only the case of the minimum output von Neumann entropy.
As before, we provide estimates, when $k$ is fixed but large, for the minimum output entropies of $\Phi \otimes \ol \Phi$ and $\Phi$. 

We start with the simpler case of the product channel $\Phi \otimes \ol \Phi$. Theorem \ref{thm:eig_product} provides the almost sure eigenvalues of $[\Phi \otimes \ol \Phi](E_{tnk})$:
\[\gamma = \gamma^{(k^{-\alpha})} = \left(\frac{1}{k^\alpha} + \frac{1}{k^2} - \frac{1}{k^{\alpha+2}},\underbrace{\frac{1}{k^2} - \frac{1}{k^{\alpha+2}} , \ldots, \frac{1}{k^2} - \frac{1}{k^{\alpha+2}}}_{k^2-1 \text{ times}}\right).\]
Using the series expansion $h(1-x) = x -x^2/2 + o(x^2)$, one can compute the asymptotics for the minimum output entropy:
\begin{proposition}
For the product channel $\Phi \otimes \ol \Phi$, the following upper bounds hold almost surely:
\begin{equation}
\label{eq:conjugate-channel-t=k-alpha}
H_{\min}(\Phi \otimes \ol \Phi) \leq H(\gamma) = 
\begin{cases}
2 \log k - \frac{(2-\alpha)\log k}{k^{\alpha}} + o\left( \frac{\log k}{k^{\alpha}} \right) \quad & \text{ if } \quad 0<\alpha < 2;\\
2 \log k - \frac{2 \log 2 - 1}{k^2} + o\left( \frac{1}{k^2} \right) \quad & \text{ if } \quad \alpha = 2;\\
2 \log k - \frac{1}{2k^{2\alpha-2}} + o\left( \frac{1}{k^{2\alpha-2}} \right) \quad & \text{ if } \quad \alpha > 2.
\end{cases}
\end{equation}
\end{proposition}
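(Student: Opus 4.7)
The plan is to establish the inequality $H_{\min}(\Phi \otimes \ol \Phi) \leq H(\gamma)$ and then to prove the asymptotic expansion of $H(\gamma)$ by direct computation. The inequality is immediate: the Bell state $E_{tnk}$ is an admissible input, so $H_{\min}(\Phi \otimes \ol \Phi) \leq H([\Phi_n \otimes \ol \Phi_n](E_{tnk}))$, and by Theorem \ref{thm:eig_product} the eigenvalue vector of the right-hand side converges almost surely to $\gamma$ in $\Delta_{k^2}$; since $H$ is continuous on this fixed-dimensional simplex, $H([\Phi_n \otimes \ol \Phi_n](E_{tnk}))$ converges almost surely to $H(\gamma)$. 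Everything else is an analytic calculation for fixed $k$, with $k\to\infty$.

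For the expansion, set $u = \lambda_1 = k^{-\alpha} + k^{-2} - k^{-\alpha-2}$; since the other $k^2-1$ eigenvalues are all equal to $(1-u)/(k^2-1)$, one rewrites
\[H(\gamma) = H_2(u) + (1-u)\log(k^2-1),\]
where $H_2(x) = -x\log x - (1-x)\log(1-x)$ denotes the binary entropy. Combining the Taylor expansions $\log(k^2-1) = 2\log k - k^{-2} + O(k^{-4})$ and $H_2(u) = -u\log u + u - u^2/2 + O(u^3)$ gives
\[H(\gamma) = 2\log k + R, \qquad R = -u\log(uk^2) + u - u^2/2 - k^{-2} + uk^{-2} + O(u^3) + O(k^{-4}).\]
The task is then to isolate the dominant term of $R$, which depends on the behavior of $uk^2 = 1 + k^{2-\alpha} - k^{-\alpha}$. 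For $0<\alpha<2$ one has $uk^2 \sim k^{2-\alpha}$, so $-u\log(uk^2) \sim -(2-\alpha)k^{-\alpha}\log k$ dwarfs every other contribution and yields the first case. For $\alpha = 2$, $uk^2 = 2 - k^{-2}$ and $\log(uk^2) = \log 2 - k^{-2}/2 + O(k^{-4})$; collecting all terms at order $k^{-2}$ in $R$ gives the announced constant $-(2\log 2 - 1)$.

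I expect the delicate case to be $\alpha > 2$, where $uk^2 \to 1$ and $\log(uk^2) \to 0$, so the linear-in-$\epsilon$ approximation $\log(1+\epsilon) \approx \epsilon$ is not sufficient. Writing $\epsilon = k^{2-\alpha} - k^{-\alpha}$ and expanding $\log(1+\epsilon) = \epsilon - \epsilon^2/2 + O(\epsilon^3)$, one must substitute into $R$ and expand $u\epsilon$ and $u\epsilon^2$ term by term. The point is that the coefficients of $k^{-\alpha}$, $k^{-2}$, $k^{-\alpha-2}$ and $k^{-4}$ arising from $-u\epsilon$ cancel \emph{exactly} against the terms $u$, $-k^{-2}$, $uk^{-2}$ and $-u^2/2$ in $R$, and what remains at leading order is the single contribution $-k^{2-2\alpha}/2 = -1/(2k^{2\alpha-2})$ coming from $-u\epsilon$ combined with $+u\epsilon^2/2$. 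Further corrections come at the strictly smaller orders $k^{-2\alpha}$, $k^{4-3\alpha}$, $k^{2-3\alpha}$, each of which is $o(k^{2-2\alpha})$ for $\alpha>2$. The only nontrivial aspect of the proof is bookkeeping these cancellations carefully enough to see that the naive $k^{-\alpha}$ and $k^{-4}$ terms are spurious and that the correct leading order is $k^{2-2\alpha}$.
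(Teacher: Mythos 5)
Your inequality step and your decomposition $H(\gamma) = H_2(u) + (1-u)\log(k^2-1)$ with $u=\gamma_1$ are both correct, and the cases $0<\alpha<2$ and $\alpha=2$ go through as you say. (The paper itself gives no proof of the expansion — it simply invokes the series $h(1-x)=x-x^2/2+o(x^2)$ and states the result — so there is no in-paper argument to compare against; yours is the natural direct computation.)

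For $\alpha>2$, however, your bookkeeping has two problems. First, the cancellation pattern you describe is not the right one: $-u\epsilon = -k^{2-2\alpha}+2k^{-2\alpha}-k^{-\alpha}+k^{-\alpha-2}-k^{-2\alpha-2}$ contains no $k^{-2}$ or $k^{-4}$ term, so it cannot ``cancel exactly against $-k^{-2}$ and $-u^2/2$.'' What actually happens is: the $\pm k^{-\alpha}$ and $\pm k^{-\alpha-2}$ cancel between $-u\epsilon$ and $u$; the $\pm k^{-2}$ cancel between $u$ and the explicit $-k^{-2}$; and the $k^{-4}$ cancellation is between $-u^2/2$, $uk^{-2}$, \emph{and} the $-k^{-4}/2$ coming from $\log(k^2-1)$. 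This last point is the second, more serious, issue: your $R$ only retains $\log(k^2-1)=2\log k - k^{-2}+O(k^{-4})$, so the formula for $R$ carries an unresolved $O(k^{-4})$ error. Since $k^{2-2\alpha}\le k^{-4}$ once $\alpha\ge 3$, that error dominates the claimed leading correction, and your argument as written only proves the $\alpha>2$ case for $2<\alpha<3$. (The same remark applies, one order down, to the $O(u^3)=O(k^{-6})$ and $O(k^{-6})$ terms for $\alpha\ge 4$, and so on.) In fact all of these error terms do cancel, but this has to be shown.

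The clean way to see the cancellations is to exploit the factorization $1-u=(1-a)(1-b)$ with $a=k^{-\alpha}$, $b=k^{-2}$, and $u=b(1+\epsilon)$ with your $\epsilon=a(1-b)/b$. One then gets
\begin{equation*}
H(\gamma)-2\log k = -b(1+\epsilon)\log(1+\epsilon) \;-\; (1-a)(1-b)\log(1-a),
\end{equation*}
and expanding both summands as power series and matching coefficient by coefficient using $b\epsilon^m=a^m(1-b)^m/b^{m-1}$, the $m=1$ terms cancel identically, the $m=2$ terms give exactly $-\tfrac{a^2(1-b)}{2b}=-\tfrac12 k^{2-2\alpha}(1-k^{-2})$, and every term with $m\ge3$ is $O(k^{m(2-\alpha)-2})=o(k^{2-2\alpha})$ for $\alpha>2$. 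This reaches the stated constant $-\tfrac12$ uniformly in $\alpha>2$ and eliminates the need to track $k^{-4}$, $k^{-6}$, etc. separately. I recommend either switching to this factorized form, or at minimum expanding $\log(k^2-1)$ and $H_2(u)$ to arbitrarily high finite order (dependent on $\alpha$) and verifying the cascade of cancellations explicitly.
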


Our estimate for the single channel case is as follows:
\begin{proposition}
\label{thm:estimate-k-alpha}
For all $\alpha >0$, the following lower bound holds true almost surely:
\[H_{\min}(\Phi ) \geq H_{\min}(\beta) = \log k  - \frac{\log k}{k^\alpha} + o\left( \frac{\log k}{k^\alpha}\right).\]
\end{proposition}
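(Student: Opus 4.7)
The plan is to combine Theorem~\ref{thm:application-entropies} with a deterministic asymptotic analysis of the Shannon entropy of $\beta^{(t)}$. Applied with $p = 1$, Theorem~\ref{thm:application-entropies} yields almost surely $\liminf_n H_{\min}(\Phi_n) \geq H(\beta^{(t)})$, so the problem reduces entirely to establishing the deterministic lower bound $H(\beta^{(t)}) \geq \log k - \log k / k^\alpha + o(\log k / k^\alpha)$ when $t = k^{-\alpha}$ and $k \to \iy$.

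The vector $\beta^{(t)}$ has a useful structural decomposition. Its coordinates are the increments $\beta_j = \phi(j/k, t) - \phi((j-1)/k, t)$ of the ``free projection CDF'' $\phi(\cdot, t)$, which consists of an atomic jump of size $t$ at $0$ (the convention $\phi(0, t) = 0$ while $\phi(0^+, t) = t$ reflects the $(1-\min(\alpha,\beta))\delta_0$ term of $\mu_1 \boxtimes \mu_2$) together with a smooth density $f(x, t) = \partial_x \phi(x, t) = 1 - 2t + (1-2x)\sqrt{t(1-t)/(x(1-x))}$ on $(0, 1-t)$, which has an integrable square-root singularity at $0$ and vanishes at $x = 1-t$. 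Consequently, $\beta_1$ absorbs the atomic mass $t$ (plus a regular contribution), while $\beta_j = \int_{(j-1)/k}^{j/k} f(x, t)\,dx$ for $j \geq 2$.

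The strategy is to exploit the identity
\[H(\beta^{(t)}) = \log k - \sum_{j=1}^k \beta_j \log(k\beta_j)\]
and to bound the correction sum by splitting the indices into three regions: a \emph{bulk} region where $x_j = j/k$ is bounded away from both endpoints, a \emph{left boundary} region near $0$ (including $j=1$), and a \emph{right boundary} region near $k(1-t)$. In the bulk, $f = 1 + O(\sqrt t)$, so $k\beta_j$ is close to $1$, and the Taylor expansion $f \log f = (f-1) + (f-1)^2/2 + O((f-1)^3)$ reduces the bulk contribution to an integral of $(1-2x)^2 t / (x(1-x))$; this integral has logarithmic divergences at both endpoints which, when cut off at the natural scales $x \sim 1/k$ and $1-x \sim t$, produce a factor of order $t \log k = \log k / k^\alpha$. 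In the left boundary region one uses the explicit asymptotics $\beta_j = 1/k + 2\sqrt{t/k}(\sqrt j - \sqrt{j-1}) + O(t/k)$ for small $j \geq 2$ and $\beta_1 = t + 1/k + 2\sqrt{t/k} + O(t/k)$ to compute the contribution directly. The right boundary contribution is negligible because $f$ vanishes continuously at $x = 1-t$.

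The main technical obstacle will be tracking the precise coefficient of the $\log k / k^\alpha$ correction, because the dominant mechanism depends on $\alpha$: for $\alpha < 1$ the main contribution comes from the atomic term $\beta_1 \log(k \beta_1) \sim (1-\alpha) t \log k$ together with the small-$j$ terms coming from the square-root singularity of $f$; for $\alpha > 1$ the atomic contribution becomes negligible and the correction is distributed across the bulk via the quadratic term; at $\alpha = 1$ both mechanisms contribute at the same order. The three regimes should be treated in parallel, with the cutoffs matched to the discretization scale $1/k$, so that the several subleading contributions combine into the single uniform estimate $\log k / k^\alpha + o(\log k / k^\alpha)$ claimed in the statement. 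Controlling the Riemann-sum discretization errors for the piecewise-smooth $f$ is routine once the bulk and boundary pieces have been isolated.
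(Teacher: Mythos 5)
Your reduction to deterministic asymptotics of $H(\beta^{(t)})$ via Theorem~\ref{thm:application-entropies} and the rescaling $H(\beta)=\log k+\frac1k H(k\beta)$ is exactly the paper's starting point, but the core of your argument is genuinely different. The paper parametrizes the increments by $\phi'_k$, observes $\phi'_k(x)=(1-2t)(1+g(x)/g(t))$, and pairs the terms $j$ and $k-j$ using the antisymmetry $g(1-x)=-g(x)$ so that the identity $h(1+s)+h(1-s)=-s^2+O(s^4)$ collapses the sum to a single divergent harmonic-like sum $\frac{-1}{\log k}\sum_{j\le k/2}\frac1j\frac{(1-2j/k)^2}{1-j/k}\to-1$. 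You instead decompose the ``free CDF'' into an atom of mass $t$ at $0$ plus the continuous density $f(\cdot,t)$ and split the index set into bulk, left-boundary, and right-boundary regions, treating the three $\alpha$-regimes separately. What your route buys is an explicit awareness of the $\alpha<1$ breakdown: the paper asserts that each $h(k\beta_j)$ is uniformly bounded and that the first $O(k^{1-\alpha})$ terms are negligible, which is false for $\alpha<1$ (there $k\beta_1\sim k^{1-\alpha}\to\infty$ and $h(k\beta_1)\sim -(1-\alpha)k^{1-\alpha}\log k$ contributes a fraction $(1-\alpha)$ of the total); the paper's final answer is only rescued by a fortuitous cancellation between the formally invalid small-$j$ contributions and the correct bulk divergence. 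Your sketch, by contrast, identifies the atom's $(1-\alpha)t\log k$ contribution explicitly. One caveat worth fixing in your write-up: for $\alpha<1$ the \emph{remaining} fraction $\alpha t\log k$ is supplied by the bulk quadratic term, whose left log-divergence is cut off at $x\sim t$ (where $s=\sqrt t g(x)$ reaches order one), giving $\frac{t}{2}\int_t^{1-t}\frac{(1-2x)^2}{x(1-x)}dx\sim\alpha t\log k$; the ``small-$j$ terms coming from the square-root singularity'' ($2\le j\lesssim k^{1-\alpha}$) are in fact only $O(t)$, not $O(t\log k)$, so your attribution of the $\alpha<1$ coefficient to the atom \emph{plus small-$j$ terms} should read atom \emph{plus bulk}. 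Also, the bulk cutoff should be at $x\sim t$ rather than $x\sim1/k$ when $\alpha<1$, since the Taylor expansion of $f\log f$ around $f=1$ fails on $1/k\lesssim x\lesssim t$. With those adjustments the three pieces sum to $t\log k$ uniformly in $\alpha$ and your route is a valid (and arguably cleaner) alternative to the paper's symmetry-pairing argument.
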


For the purposes of this proof, we define 
$\phi_k : [0, 1-k^{-\alpha}] \to [0, 1]$, $\phi_k(x) = \phi(x, k^{-\alpha})$ and $h(x) = -x\log x$. We have
\begin{align*}
\frac{\partial}{\partial x} \phi(x,y) &=1-2y +\sqrt{y(1-y)}\left[ \frac{\sqrt{1-x}}{\sqrt{x}} - \frac{\sqrt{x}}{\sqrt{1-x}}\right] = (1-2y)\left( 1 + \frac{g(x)}{g(y)}\right),
\end{align*}
where the function $g: (0,1) \to \R$ is defined by
\begin{align*}
g(x) = \frac{\sqrt{1-x}}{\sqrt{x}} - \frac{\sqrt{x}}{\sqrt{1-x}}.
\end{align*}

\begin{proof}
According to Theorem \ref{thm:application-entropies}, 
for all $\e >0$, 
\[H_{\min}(\Phi) \geq H(\beta) - \e,\]
where $\beta = \beta^{(k^{-\alpha})}$ is the $k$-dimensional vector defined by
\begin{align*}
\beta_1 &= \phi_k\left(\frac{1}{k}\right) ;\\
\beta_j &= \phi_k\left(\frac{j}{k}\right) - \phi_k\left(\frac{j-1}{k}\right) = \frac{1}{k} \phi'_k\left(\frac{\xi_j}{k}\right), \quad \forall \; 2 \leq j \leq J;\\
\beta_j &= 0 \quad \forall \; J < j \leq k.
\end{align*}

The index $J$ is the number of non-trivial inequalities we get by using Theorem \ref{thm:main-thm}, and it is equal to $k-1$ if $\alpha \geq 1$ and to $\floor{k - k^{1-\alpha}}$ if $\alpha < 1$.

Our purpose in what follows is to provide a ``good'' estimate for $H(\beta)$. We start by rescaling the eigenvalues: $H(\beta) = \log k + \frac{1}{k} H(k\beta)$. In this way, we can focus on the ``entropy defect'' $\log k - H(\beta)$ and reduce our problem to showing that 
\begin{equation}
\label{eq:taylor-0}
\frac{k^{\alpha-1}}{\log k} H(k\beta) = \frac{k^{\alpha-1}}{\log k} \sum_{j=1}^{J} h(k\beta_j) \xrightarrow[k \to \iy]{} -1,
\end{equation}

The next step in our asymptotic computation is to replace the unknown points $\xi_j$ by simpler estimates of the type $j/k$. Notice that the largest eigenvalue $\beta_1$ is of order $k^{-1}$. By the continuity of the function $h$, there exists a constant $C >0$ such that $|h(k\beta_j)| \leq C$ and thus, individual terms in the sum (\ref{eq:taylor-0}) have no asymptotic contribution. Moreover, we can assume $J=k-1$, ignoring at most $k^{1-\alpha}$ terms which have again no asymptotic contribution. It is clear that the function $x \mapsto \phi'_k(x)$ is decreasing at fixed $k$ and since the entropy function $h$ is increasing for $x\in (0, e^{-1})$ and decreasing for $x \geq e^{-1}$,we can bound $h(\phi'_k(\xi_j / k))$ by $h(\phi'_k(j / k))$, and we reduce our problem to showing that 
\begin{equation}
\label{eq:taylor}
\frac{k^{\alpha - 1}}{\log k} \sum_{j=1}^{k-1} h\left(\phi'_k\left(\frac{j}{k}\right)\right) \xrightarrow[k \to \iy]{} -1,
\end{equation}
or, equivalently, 
\[\frac{k^{\alpha - 1}}{\log k} \sum_{j=1}^{\floor{k/2}} h\left(\phi'_k\left(\frac{j}{k}\right)\right) + h\left(\phi'_k\left(1-\frac{j}{k}\right)\right) \xrightarrow[k \to \iy]{} -1.\]

Now, 
\[h\left[(1-2y)\left(1+\frac{g(x)}{g(y)}\right)\right] + h\left[(1-2y)\left(1-\frac{g(x)}{g(y)}\right)\right] = \]
\[=2h(1-2y) + (1-2y)\left[h\left(1+\frac{g(x)}{g(y)}\right) + h\left(1-\frac{g(x)}{g(y)}\right)\right].\]
The term $2h(1-2y) = 2h(1-2k^{-\alpha}) \sim 4k^{-\alpha}$ has no asymptotic contribution and, using $h(1+t) + h(1-t) = -t^2 + O(t^4)$, we are left with computing the limit of the main contribution 
\[\frac{k^{\alpha - 1}}{\log k} \sum_{j=1}^{\floor{k/2}} -\frac{g(j/k)^2}{g(y)^2} \sim \frac{k^{\alpha - 1}}{\log k} \sum_{j=1}^{\floor{k/2}} -g(j/k)^2k^{-\alpha}.\]
Finally, 
\[\frac{1}{k\log k} \sum_{j=1}^{\floor{k/2}} -g(j/k)^2 = \frac{-1}{\log k} \sum_{j=1}^{\floor{k/2}} \frac{1}{j} \frac{(1-2j/k)^2}{1-j/k} \sim -\frac{\log(k/2)}{\log k} \xrightarrow[k \to \iy]{} -1.\]
The error term 
\[\frac{k^{\alpha - 1}}{\log k} \sum_{j=1}^{\floor{k/2}} -\frac{g(j/k)^4}{g(y)^4} \sim \frac{k^{\alpha - 1}}{\log k} \sum_{j=1}^{\floor{k/2}} -g(j/k)^4k^{-2\alpha} \sim \frac{1}{k^{\alpha+1}\log k} \sum_{j=1}^{\floor{k/2}} \frac{1}{j^2} \frac{(1-2j/k)^4}{(1-j/k)^2} \]
converges to zero. In conclusion, we have shown that Equation \eqref{eq:taylor-0} holds and we deduce that 
\[H(\beta) = \log k - \frac{\log k}{k^\alpha} + o\left( \frac{\log k}{k^\alpha}\right).\].
\end{proof}

The bounds obtained in this section do not yield a violation of the Holevo additivity conjecture.
However, after the first version of this paper was released, Brandao-Horodecki \cite{brandao-horodecki} 
and Fukuda-King \cite{fukuda-king}
used the same model as ours and adapted original ideas from Hastings \cite{hastings} to prove that 
this model can also lead to a violation of the minimum output entropy additivity.

The techniques in \cite{brandao-horodecki,fukuda-king} yield more information on the possibility of large values of the minimum output entropy for the model under discussion. However, our proofs are of free probabilistic nature and yield results of almost sure nature. In addition, \cite{brandao-horodecki,fukuda-king} rely very much on the actual properties of Shannon's entropy function, whereas our techniques attack directly the question of the behavior of the eigenvalues. 

We conjecture that the set $S(\beta^{(t)})$ (having the property that for any $\e >0$, $S(\beta^{(t)})+\e$ contains almost surely the eigenvalues of outputs of random quantum channels)
can be made smaller and actually optimal, thus yielding as a byproduct that 
all the values $H_{\min}(\Phi)$ converge almost surely. However, the results of this paper show that
the notion of majorization is not sufficient to achieve this goal.

\section*{Acknowledgments}

This paper was completed while one author (B.C.)
 was visiting the university of Tokyo and then the university
of Wroc\l{}aw and he thanks
these two institutions
for providing him with a very fruitful working environment. I.N. thanks Guillaume Aubrun for useful discussions.

B.C. was partly funded by ANR GranMa and ANR Galoisint.
The research of both authors was supported in part by NSERC
grants including 
grant RGPIN/341303-2007.

\end{document}